\def\blfootnote{\gdef\@thefnmark{}\@footnotetext}
\crefname{equation}{formula}{Formula}
\crefname{enumi}{}{}
\crefname{figure}{Figure}{Figure}
\crefname{table}{Table}{Table}
\declaretheorem[name=Theorem, numberwithin=section]{thm}
\declaretheorem[name=Theorem, numbered=no]{thm*}
\declaretheorem[name=Lemma,numberlike=thm]{lem}
\declaretheorem[name=Corollary,numberlike=thm]{cor}
\declaretheorem[name=Corollary,numbered=no]{cor*}
\declaretheorem[name=Proposition,numberlike=thm]{prop}
\declaretheorem[name=Definition,numberlike=thm, style=definition]{defi}
\declaretheorem[name=Example, numberlike=thm, style=definition]{ex}
\declaretheorem[name=Remark, numberlike=thm, style=definition]{rem}
\crefname{thm}{Theorem}{Theorems}
\crefname{lem}{Lemma}{Lemmas}
\crefname{defi}{Definition}{Definitions}
\crefname{cor}{Corollary}{Corollaries}
\crefname{prop}{Proposition}{Propositions}
\crefname{ex}{Example}{Examples}
\crefname{rem}{Remark}{Remarks}
\crefname{section}{Section}{Sections}
\crefname{chapter}{Chapter}{Chapters}
\newenvironment{definlist}
{\begin{inparaenum}[(i)]}
{\end{inparaenum}}
\newenvironment{thminlist}
{\begin{inparaenum}[(i)]}
{\end{inparaenum}}
\numberwithin{equation}{section}
\newcommand{\Lp}{\mathrm{L}}
\newcommand{\field}[1]{\mathbb{#1}}
\newcommand{\Z}{\field{Z}}
\newcommand{\N}{\field{N}}
\newcommand{\R}{\field{R}}
\newcommand{\Rgeq}{\R_{\geq 0}}
\newcommand{\Rgr}{\R_{> 0}}
\newcommand{\euclSpace}{\mathbb{E}}
\newcommand{\proj}{\operatorname{pr}}
\newcommand{\ev}{\operatorname{ev}}
\newcommand{\walls}{\mathcal{W}}
\newcommand{\cat}{\mathrm{CAT}}
\newcommand{\Intv}{\mathrm{I}}
\newcommand{\rk}{\operatorname{rank}}
\newcommand{\hypCM}{\mathrm{hyp}}
\newcommand{\isom}{\operatorname{Isom}}
\newcommand{\qisom}{\operatorname{QIsom}}
\newcommand{\relmiddle}[1]{\mathrel{}\middle#1\mathrel{}}
\newcommand{\middlemid}{\relmiddle{|}}
\newcommand{\assympCone}{\mathrm{Cone}}
\newcommand{\basePt}{\bullet}
\newcommand{\fixedPtProperty}{\mathrm{F}}
\newcommand{\FHomCone}{\fixedPtProperty_{\assympCone}}
\newcommand{\coarseEq}{\sim}
\renewcommand{\geq}{\geqslant}
\renewcommand{\leq}{\leqslant}
\newcommand{\finMedSub}{\mathscr{M}}
\newcommand{\median}{\mu}
\newcommand{\cube}{\mathscr{C}}
\newcommand{\myAffiliation}{{
 \textsc{Mathematisches Institut, Georg--August--Universität Göttingen, Bunsenstraße 3-5, 37073 Göttingen, Germany}
  
  \textit{E-mail address:} \href{mailto:rzeidle@uni-goettingen.de}{\nolinkurl{rzeidle@uni-goettingen.de}}
}}
\title{Coarse median structures and homomorphisms from Kazhdan groups\blfootnote{\textup{2010} \textit{Mathematics Subject Classification.} 20F65.}\blfootnote{\textit{Key words and phrases.} Coarse median spaces, property (T), outer automorphism group.}}
\author{Rudolf Zeidler\thanks{The research was conducted mostly while the author prepared his Master~thesis~\cite{zeidler:masterThesis} at the University of Vienna and was partially supported by the European Research Council (ERC) grant of Goulnara Arzhantseva, grant agreement n\textdegree259527.
The author is currently supported by the German Research Foundation (DFG) through the Research Training Group 1493 \enquote{Mathematical structures in modern quantum physics.}}}
\date{}
\begin{document}

\maketitle
\begin{abstract}
 We study Bowditch's notion of a coarse median on a metric space and formally introduce the concept of a \emph{coarse median structure} as an equivalence class of coarse medians up to closeness.
 We show that a group which possesses a uniformly left-invariant coarse median structure admits only finitely many conjugacy classes of homomorphisms from a given group with Kazhdan's property (T).
 This is a common generalization of a theorem due to Paulin about the outer automorphism group of a hyperbolic group with property (T) as well as of a result of Behrstock--Dru{\c{t}}u--Sapir on the mapping class groups of orientable surfaces.
 We discuss a metric approximation property of finite subsets in coarse median spaces extending the classical result on approximation of Gromov hyperbolic spaces by trees.
\end{abstract}

\section{Introduction}
\emph{Coarse median spaces} may be viewed as a large-scale generalization of both median algebras and median metric spaces including the class of Gromov hyperbolic spaces.
The concept of a coarse median on a metric space has been first introduced and studied by Bowditch~\cite{bowditch:coarseMedian,bowditch:embeddingMedianAlgebrasIntoTrees,bowditch:RelHyperbolicityInvarianceOfcoarseMedian}.
A \emph{coarse median} is a ternary operation $\mu : X \times X \times X \to X$ on a metric space $X$ which --- in a certain precise sense --- satisfies the axioms of a median algebra up to bounded error.

Recapitulating~\cite{bowditch:coarseMedian}, the main features are as follows.
The existence of a coarse median is a quasi-isometry invariant of the underlying metric space. Thus the theory is well-suited to be applied in the study of finitely generated groups.
There is a notion of ``rank'' for coarse medians and the Gromov hyperbolic spaces are precisely those spaces which admit a coarse median of rank $1$.
More generally, asymptotic cones of spaces admitting a rank $n$ coarse median are metric median algebras of rank at most $n$.
In particular, a space admitting a coarse median of rank $n$ does not admit a quasi-isometric embedding of $\R^{m}$ for $m > n$.
Coarse median groups are always finitely presented with an at most quadratic Dehn function.
Easy higher rank examples such as $\Z^n$ can be obtained because direct products of coarse median spaces are coarse median themselves.
Moreover, based on the work of Behrstock--Minsky~\cite{behrstock-minsky:centroidsAndRapidDecay}, Bowditch has shown that the mapping class group of an orientable surface admits a coarse median of rank at most the complexity of the surface.
The class of coarse median spaces is also stable under relative hyperbolicity~\cite{bowditch:RelHyperbolicityInvarianceOfcoarseMedian}.

\subsection{Coarse median spaces} \label{subsec:introToCoarseMedian}
We begin with the definition of a coarse median.
As a prerequisite we need the concept of a \emph{finite median algebra}.
As advocated by Bowditch~\cite{bowditch:coarseMedian}, we think of a finite median algebra as the vertex set of a finite $\cat(0)$ cube complex together with the ternary operation sending each triple of vertices to their median point according to the median metric induced by the $1$-skeleton (cf.~\cite[\S 10]{roller:pocSets-MedianAlgebras}).
For finite median algebras this viewpoint is completely equivalent to the more traditional axiomatic definition.
See~\cite{bowditch:coarseMedian} for a discussion well suited to our purposes.
For a broader viewpoint on median algebras and related structures we refer to \cite{bandelt-hedlikova:medianAlgebras,roller:pocSets-MedianAlgebras,chepoi:medianGraphs}.
 
 Let $(X, d)$ be a metric space.
 A ternary operation $\mu \colon X^3 \to X$ is called a \emph{coarse median} if there is a constant $k \in \Rgeq$ and a non-decreasing function $h \colon \N \to \Rgeq$ such that the following conditions are satisfied.
 
 \begin{definlist}
  \item The map $\mu$ is \emph{coarsely Lipschitz} with multiplicative constant $k$ and additive constant $h(0)$, that is, \label{item:cM1}for every $x, y, z, x^\prime, y^\prime, z^\prime \in X$, we have,
  \begin{equation*} \label{eq:coarseMedianCont}
   d\left( \mu(x, y, z), \mu(x^\prime, y^\prime, z^\prime) \right) \leq k \left( d(x,x^\prime) + d(y, y^\prime) + d(z, z^\prime) \right) + h(0).
  \end{equation*}
  
  \item \label{item:cM2} For every finite non-empty subset $A \subseteq X$, there is a finite median algebra $(M, \mu_M)$ together with maps $\lambda \colon M \to X$ and $\pi \colon A \to M$ such that 
  \begin{align*}
   d(a, \lambda(\pi(a))) &\leq h(|A|) \qquad \text{for all $a \in A$}, \\
   d\left( \lambda(\mu_M(m_1,m_2,m_3)), \mu(\lambda(m_1), \lambda(m_2), \lambda(m_2)) \right) &\leq h(|A|)      \qquad \text{for all $m_i \in M, i \in \{1,2,3\}$}.
  \end{align*}
 \end{definlist}
 We call $k$ and $h$ \emph{parameters} of the coarse median $\mu$.
 Note that we only require their existence and we do not consider a particular choice of parameters to be part of the structure of a coarse median.
 However, if it is possible to fix parameters such that the cube complexes underlying all the finite median algebras in condition~\labelcref{item:cM2} have dimension at most $n$, then we say that the coarse median $\mu$ has \emph{rank at most $n$}.
 
 A \emph{coarse median space} is a metric space together with a coarse median.

 Intuitively, the inequalities in condition \labelcref{item:cM2} mean that $\lambda$ is almost a homomorphism of the respective ternary operations and $\lambda \circ \pi$ is almost the inclusion $A \hookrightarrow X$.
 In view of the previous discussion of finite median algebras, condition \labelcref{item:cM2} may be interpreted as saying that every finite subset $A \subseteq X$ is (in a certain sense) approximately contained in a finite $\cat(0)$ cube complex, up to an error which only depends on the cardinality of $A$.

\subsection{Statement of results}
In order to compare different coarse medians in a meaningful way, we introduce the concept of a \emph{coarse median structure} on a metric space as a closeness class of coarse medians.
Here we say that two coarse medians $\mu, \mu^\prime \colon X^3 \to X$ are \emph{close} if there exists $R \geq 0$ such that $d\left( \mu(x_1,x_2,x_3), \mu^\prime(x_1,x_2,x_3) \right) \leq R$ for all $x_1,x_2,x_3 \in X$.
This has the advantage that it is straightforward to construct a well-defined \emph{pullback} and \emph{pushforward} of a coarse median structure via a quasi-isometry (cf.~\cref{defi:inducedMedian}).
One verifies that this construction satisfies several formal properties (cf.~\cref{prop:formalPropOfInducedCM}). 
In particular, the pushforward and pullback are compatible with the composition of quasi-isometries.
We illustrate the usefulness of these formal definitions by considering hyperbolic groups acting on $\cat(0)$ cubical complexes.
Indeed, in such a case, the coarse median structure on the hyperbolic group is related to the natural coarse median structure on the cube complex via the quasi-isometry defined by the orbit map (cf.~\cref{ex:HyperbolicGpactingOnCCC}).

Moreover, we say that a coarse median structure on a finitely generated group is \emph{uniformly left-invariant} if one (and hence any) of its representatives is almost equivariant with respect to the left multiplication action up to a uniform error (cf.~\cref{defi:uniformLeftInvariant}).
The coarse median structure on a hyperbolic group is automatically uniformly left-invariant.
This leads us to our main results, \cref{thm:paulinForCoarseMedian,cor:paulinForCoarseMedian}, which generalize the work of Paulin~\cite{paulin:outerAutomorphismsOfHyperbolicGroups} asserting that a hyperbolic group with Kazhdan's property (T) has finite outer automorphism group.
\begin{restatable*}{thm}{thmPaulinForCoarseMedian} \label{thm:paulinForCoarseMedian}
 Let $G$ be a finitely generated group which admits a uniformly left-invariant coarse median structure of finite rank. 
 Then for any finitely generated group $H$ with Kazhdan's property (T), there are only finitely many conjugacy classes of homomorphisms from $H$ into $G$.
\end{restatable*}
\begin{restatable*}{cor}{corPaulinForCoarseMedian} \label{cor:paulinForCoarseMedian}
   If a group admits a uniformly left-invariant coarse median structure of finite rank and has Kazhdan's property (T), then its outer automorphism group is finite.
\end{restatable*}
For example, the theorem applies if $G = G_1 \times \dotsm \times G_n$ is a finite direct product of hyperbolic groups $G_i$.
In particular, by the corollary, if such a group $G$ has Kazhdan's property (T), then its outer automorphism group is finite (cf. \cref{ex:productsOfHyperbolicWithT}).

\cref{thm:paulinForCoarseMedian} is inspired by the analogous theorem for mapping class groups due to Behrstock--Dru{\c{t}}u--Sapir~\cite[Theorem 1.2]{behrstock-drutu-sapir:MedianStructuresOnAymptoticCones}.
In fact, using Bowditch's work on coarse medians on the mapping class groups~\cite[Theorem 2.5]{bowditch:coarseMedian}, their result is a special case of our theorem.

Our proof of \cref{thm:paulinForCoarseMedian} follows the same general idea as set forth by Paulin's original work (the main steps of which we recapitulate in \cref{subsec:genericPaulin}).
However, to make it work in our setting, the crucial ingredient is a new fixed point theorem for certain actions of groups with Kazhdan's property (T) on metric median algebras (cf.~\cref{thm:TfixedPoint}).

The proof of this fixed point result, which is of independent interest, proceeds as follows.
First, we show that the metric on an appropriate class of metric median algebras can be canonically bi-Lipschitz deformed so that the space becomes a median metric space (cf.~\cref{subsec:rectifying}).
Second, we use that fact that every isometric action of a Kazhdan group on a median space has bounded orbits~\cite{nica:masterThesis,chatterji_drutu_haglund:haagerupMedian}.
Third, we appeal to a recent construction due to Bowditch~\cite{bowditch:somePropertiesOfMedianSpaces} implying that our median metric space can be bi-Lipschitz deformed into a $\cat(0)$ space, where finally we are in a position to apply the well-known fixed point lemma for isometric actions with bounded orbits.

In a different direction, we discuss metric approximations of finite subsets in coarse median spaces by finite median metric spaces. 
This uses ideas similar in spirit to what is also needed for the deformation of metric median algebras into median spaces.
The definition of a coarse median already states that every finite subset can be approximated --- on the level of ternary operations --- by a finite median algebra.
We show that this is enough to imply the following metric approximation property.

\begin{restatable*}{thm}{thmFiniteSubsetApproxByCCC} \label{thm:finiteSubsetApproxByCCC}
 Let $(X,d)$ be a coarse median space. Then there exist functions $\alpha, \varepsilon \colon \N \to \Rgeq$ (which only depend on parameters of the coarse median on $X$) such that the following holds.
 
 For each finite subset $A \subseteq X$, there exists a finite median metric space $(M, d_l)$, as well as an $(\alpha(|A|), \varepsilon(|A|))$-quasi-isometric embedding $f \colon (M, d_l) \to (X,d)$, such that $A \subseteq f(M)$.
 Moreover, if $(X,d)$ admits a coarse median of rank at most $n$, we can assume $M$ to always have rank at most $n+1$.
\end{restatable*}

Since a finite median metric space is just the vertex set of a finite $\cat(0)$ cube complex (with possibly rescaled edge lengths), this result may be reframed as approximation by such finite $\cat(0)$ cubical complexes.
Hence it generalizes the classical result on approximating trees in Gromov hyperbolic spaces~\cite[\S 6.2]{gromov:hyperbolicGroups}.

\subsection{Plan of the paper}
The paper is organized as follows.

In \cref{sec:prerequ}, we review basic facts about median metric spaces and median algebras, and set up some notation for later use.

Our method of turning certain metric median algebras into honest median metric spaces and the conclusion of our fixed point result is explained in \cref{sec:metricMedian}.

In \cref{sec:qiInvariance}, we revisit the definition of coarse medians and introduce coarse median structures.
Then we proceed with our formal study of quasi-isometry invariance of coarse median spaces and finally discuss the example of hyperbolic groups acting on $\cat(0)$ cubical complexes.

\Cref{sec:homosFromKazhdan} is dedicated to the proof of our main result on homomorphisms from Kazhdan groups into groups with an uniformly left-invariant coarse median structure.

Finally, \Cref{sec:approxFiniteSubsets}, which is concerned with our approximation result about finite subsets in coarse median spaces, can be read mostly independently from the rest of the paper (after possibly reviewing some notation from \cref{sec:prerequ}). 

\subsection{Acknowledgements}
The author would like to express his gratitude towards his Master thesis advisor, Goulnara Arzhantseva, for introducing him to this subject, sharing her knowledge, many helpful suggestions, and most of all, for encouraging him to write this paper.
He wishes to thank Fr{\'e}d{\'e}ric Paulin for giving an enlightening mini-course at the second Young Geometric Group Theory Meeting which stimulated the author's work towards a generalization of the result concerning outer automorphisms of hyperbolic groups with Kazhdan's property (T).
The author also thanks Bogdan Nica and the anonymous referee for useful comments on the manuscript.

\section{Prerequisites} \label{sec:prerequ}
\subsection{Median metric spaces} \label{subsec:medianAlgebras}
In this and the following subsection, we collect standard results on median metric spaces and median algebras.
These and related structures have been studied in various contexts by many different authors, so we can only mention a selection of references \cite{bandelt-hedlikova:medianAlgebras,roller:pocSets-MedianAlgebras,chepoi:medianGraphs,nica:masterThesis,chatterji_niblo:wallToCat0,
chatterji_drutu_haglund:medianSpaces,chatterji_drutu_haglund:haagerupMedian} without claiming historical completeness.

Let $(X,d)$ be a metric space and $x, y \in X$. The \emph{metric interval} between $x$ and $y$ is the set $\Intv_X^d(x,y) = \left\{ z \in X \middlemid d(x,z) + d(z,y) = d(x,y) \right\}$.
\begin{defi} \label{defi:medianMetric}
 A metric space $(X,d)$ is a \emph{median metric space} if the set $\bigcap_{i=1}^3 \Intv_X^d(x_i,x_{i+1})$ (indices are taken modulo $3$) has exactly one element $\mu_d(x_1,x_2,x_3)$ for all $x_1,x_2,x_3 \in X$.
\end{defi}
Given a median metric space $(X,d)$, we view $\mu_d$ as a ternary operation $X \times X \times X \to X$ and call it the \emph{intrinsic median operation} of $(X,d)$.

\begin{ex}
 The vertex set $C^{(0)}$ of a $\cat(0)$ cubical complex $C$ endowed with the restriction of the path metric from the $1$-skeleton is a median metric space~\cite[\S 10]{roller:pocSets-MedianAlgebras}~\cite[Theorem 6.1]{chepoi:medianGraphs}.
 We denote the median operation on $C^{(0)}$ by $\mu_{C^{(0)}}$.
\end{ex}

\begin{ex}
 $\R^n$ endowed with the metric induced by the $1$-norm is a median metric space.
 More generally, $\Lp^1(X)$ is a median metric space for any measured space $X$.  
\end{ex}

\subsection{Median algebras}
\begin{defi} \label{defi:medianAlgebra}
 A \emph{median algebra} is a set $M$ together with a ternary operation $\mu \colon M^3 \to M$ such that for every $x, y, z, u, v \in M$, the following identities hold.
 \begin{align*}
  &\mu(x, y, z) = \mu(x, z, y) = \mu(y, z, x), \\
  &\mu(x, x, y) = x, \\
  &\mu(\mu(x,y,z), u, v) = \mu(x, \mu(y, u, v), \mu(z, u, v))
 \end{align*}
 \end{defi}
 Given a median algebra $(M, \mu)$ and $x, y \in M$, the \emph{median interval} between $x$ and $y$ is the set $\Intv_{M}^\mu(x,y) := \left\{ z \in M \mid \mu(x,y,z) = z \right\}$.

  A subset $A \subseteq M$ of a median algebra is called a \emph{subalgebra} if for any $a,b,c \in A$, their median $\mu(a,b,c)$ lies in $A$ as well.
 The smallest subalgebra of $M$ containing $A$ is called the \emph{subalgebra generated by $A$}, and we denote it by $\left<A\right>$.
The subset $A$ is called \emph{convex} if for all $a,b \in A$, we have $\Intv_{M}^\mu(a,b) \subseteq A$.

A \emph{wall} $\{h, h^c\}$ in a median algebra is a partition $M = h \sqcup h^c$ into two non-empty and convex subsets $h, h^c \subseteq M$.
We denote the set of walls in $(M,\mu)$ by $\walls(M,\mu)$ or just $\walls(M)$.
A wall $\{h,h^c\} \in \walls(M)$ \emph{separates} two points $a,b \in M$ if $a \in h$ and $b \in h^c$ or vice-versa.
The set of walls separating $a$ and $b$ is denoted by $\walls^M(a|b)$.
Given two distinct elements of a median algebra, there always exists a wall separating them.
In fact, any two disjoint convex subsets can be separated by a wall~\cite[\S 2]{roller:pocSets-MedianAlgebras}.

An \emph{$n$-cube} in a median algebra is a subalgebra isomorphic to the median algebra $\{-1,1\}^n$.

\begin{rem} \label{rem:medianMetricIsMedianAlgebra}
Every median metric space admits a unique structure as a median algebra such that the metric intervals agree with the median intervals.
This can easily be seen from the characterization of median algebras stated in~\cite[Theorem 2.1]{bandelt-hedlikova:medianAlgebras}.
The corresponding ternary operation is just the intrinsic median operation $\mu_d$ from \cref{defi:medianMetric}.
\end{rem}

\begin{ex}
 The vertex set of a $\cat(0)$ cubical complex is a median algebra in a natural way.
 The median walls as defined above coincide with the walls determined by the geometric hyperplanes of the $\cat(0)$ cubical complex.
\end{ex}

Two edges in a $\cat(0)$ cubical complex are said to be \emph{parallel} if they intersect the same wall.

\begin{rem}[{\cite{chatterji_niblo:wallToCat0},\cite[\S 10]{roller:pocSets-MedianAlgebras},\cite[Section 5]{bowditch:coarseMedian}}] 
 \label{finiteMedianAlgebraIsCube}
Conversely, if $(M,\mu)$ is a finite median algebra, then we can construct a (simplicial) graph $\cube^{(1)}(M)$ with vertex set $\cube^{(0)}(M,\median) := M$ and an edge between two elements $x,y \in M$ whenever there exists a \emph{unique} wall $W \in \walls(M,\median)$ separating $x$ and $y$.
The graph $\cube^{(1)}(M,\median)$ can be uniquely completed to a finite $\cat(0)$ cubical complex $\cube(M,\mu)$.
The path metric on $\cube^{(1)}(M)$ induces a median metric on $M$ with intrinsic median operation equal to $\median$.
In fact, there is a bijective correspodence $M \mapsto \cube(M)$ between isomorphism classes of finite median algebras and isomorphism classes of finite $\cat(0)$ cubical complexes.
\end{rem}

\begin{defi} \label{defi:rankOfMedianAlg}
 The \emph{rank} of a finite median algebra $M$ is the dimension of $\cube(M)$. The \emph{rank} of a general median algebra is the (possibly infinite) supremum over the ranks of all finite subalgebras.
\end{defi}

In \cref{defi:medianAlgebra}, we have given the abstract definition of a median algebra. 
However, in the spirit of~\cite{bowditch:coarseMedian}, we will not need to use this definition in practice.
This is because the median subalgebra generated by a finite subset of some median algebra is always finite itself.

\begin{lem}[{see e.g.~\cite{nica:masterThesis},\cite[Lemma 4.2]{bowditch:coarseMedian}}] \label{lem:finiteSubalgebra}
 For every finite subset $A \subseteq M$ of a median algebra, the cardinality of the subalgebra generated by $A$ is bounded above by $2^{2^{|A|}}$.
\end{lem}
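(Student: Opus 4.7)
The plan is a two-step bound: first embed $\langle A\rangle$ injectively into $2^{\walls(\langle A\rangle)}$, then bound $|\walls(\langle A\rangle)|$ by $2^{|A|}$. For the first step, I would pick for each wall $W=\{h_W,h_W^c\}\in\walls(\langle A\rangle)$ one of its two sides $h_W$ and consider the assignment $x\mapsto\{W:x\in h_W\}$. This is an injection $\langle A\rangle\hookrightarrow 2^{\walls(\langle A\rangle)}$ by the fact recalled in \cref{subsec:medianAlgebras} that any two distinct points of a median algebra are separated by some wall, yielding $|\langle A\rangle|\leq 2^{|\walls(\langle A\rangle)|}$.

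For the second step, the key observation is that the sides of walls of $\langle A\rangle$ correspond bijectively to median homomorphisms $\langle A\rangle\to\{0,1\}$, where $\{0,1\}$ carries the majority median. One direction is clear; the nontrivial direction is that the characteristic function $\chi_h$ of each side $h$ is indeed such a homomorphism. This in turn rests on the standard identity $\mu(a,b,c)\in\Intv_{\langle A\rangle}^\mu(a,b)$ in any median algebra, from which convexity of $h$ (respectively $h^c$) forces $\mu(x,y,z)$ to lie on the side containing the majority of $\{x,y,z\}$. Since $A$ generates $\langle A\rangle$, a median homomorphism out of $\langle A\rangle$ is determined by its restriction to $A$, so the map $h\mapsto h\cap A$ injects the sides of walls of $\langle A\rangle$ into $2^A$. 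As a wall is an unordered pair of complementary sides, $|\walls(\langle A\rangle)|\leq 2^{|A|-1}\leq 2^{|A|}$. Chaining the two bounds gives $|\langle A\rangle|\leq 2^{2^{|A|}}$.

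The main step requiring care is the identity $\mu(a,b,c)\in\Intv_{\langle A\rangle}^\mu(a,b)$, which I expect to be the principal hurdle to spell out. It follows from an application of the associativity axiom: substituting $(u,v)=(b,c)$ into $\mu(\mu(x,y,z),u,v)=\mu(x,\mu(y,u,v),\mu(z,u,v))$ with $(x,y,z)=(a,b,c)$ collapses to $\mu(\mu(a,b,c),b,c)=\mu(a,b,c)$, placing $\mu(a,b,c)$ in $\Intv_{\langle A\rangle}^\mu(b,c)$, and the other two intervals follow by the permutation symmetry of $\mu$. Once this is in hand, the rest of the argument proceeds as outlined without further complication, and all ingredients (separation of points by walls, universal property of the generated subalgebra, closure of convex sets under outside-medians) are generalities about median algebras that have already been recalled in the preceding text.
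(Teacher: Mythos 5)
Your argument is correct and is the standard one: it is essentially the proof given in the cited references (halfspaces of $\langle A\rangle$ are determined by their traces on the generating set $A$ because their characteristic functions are median homomorphisms to the two-point majority algebra, and points of $\langle A\rangle$ are separated by walls, giving $|\langle A\rangle|\leq 2^{2^{|A|}}$). The paper itself states the lemma without proof, so there is nothing further to compare; all the supporting facts you invoke, including $\mu(a,b,c)\in\Intv^{\mu}_{\langle A\rangle}(b,c)$ via the associativity axiom, check out.
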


Together with \cref{finiteMedianAlgebraIsCube} this implies a characterization of median algebras as follows.

\begin{cor} \label{cor:alternativeDefOfMedianAlgebra}
 Let $M$ be a set and $\mu \colon M^3 \to M$ a ternary operation. Then $(M, \mu)$ is a median algebra if and only if the following holds:
 
 For every finite subset $A^\prime \subseteq M$, there is another finite subset $A^\prime \subseteq A \subseteq M$ such that $A$ is closed under $\mu$ and $(A, \mu)$ is isomorphic to the median algebra determined by some finite $\cat(0)$ cubical complex.
\end{cor}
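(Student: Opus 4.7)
The plan is to verify both directions by reducing everything to statements about finite subalgebras, where we can invoke \cref{finiteMedianAlgebraIsCube,lem:finiteSubalgebra}.

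For the forward implication, assume $(M,\mu)$ is a median algebra in the sense of \cref{defi:medianAlgebra}. Given a finite subset $A^\prime \subseteq M$, I would take $A := \langle A^\prime \rangle$ to be the subalgebra generated by $A^\prime$. By \cref{lem:finiteSubalgebra}, $A$ is finite, and by construction it is closed under $\mu$. The restriction of $\mu$ to $A$ continues to satisfy the three axioms of \cref{defi:medianAlgebra}, so $(A,\mu)$ is itself a finite median algebra. \Cref{finiteMedianAlgebraIsCube} then identifies $(A,\mu)$ with the median algebra on the vertex set of the finite $\cat(0)$ cubical complex $\cube(A,\mu)$.

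For the reverse implication, I would verify each of the three identities in \cref{defi:medianAlgebra} pointwise. Given arbitrary elements involved in any one axiom (at most five, namely $x,y,z,u,v$ for associativity), I would set $A^\prime$ to be the finite set of these elements and apply the hypothesis to obtain a finite subset $A^\prime \subseteq A \subseteq M$ that is closed under $\mu$ and isomorphic, as a structure $(A,\mu)$, to the median algebra arising from a finite $\cat(0)$ cubical complex. Since such cube-complex median algebras are known to satisfy the axioms of \cref{defi:medianAlgebra} (see \cref{subsec:medianAlgebras} and the discussion preceding \cref{finiteMedianAlgebraIsCube}), the required identity holds in $(A,\mu)$ and, in particular, for the chosen elements. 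As these elements were arbitrary in $M$, the identity holds globally.

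There is no serious obstacle here; the only mildly delicate point is making sure that, in the reverse direction, the finite subset $A$ produced by the hypothesis actually contains all the \emph{compound} terms appearing in the axiom being verified (e.g.\ $\mu(x,y,z)$ and $\mu(y,u,v)$, $\mu(z,u,v)$ in the associativity axiom). This is automatic since $A$ is closed under $\mu$, so these nested medians already lie in $A$, and the isomorphism with a cube-complex median algebra then delivers the identity in $A$ verbatim. Thus the equivalence follows with essentially no computation beyond invoking \cref{lem:finiteSubalgebra,finiteMedianAlgebraIsCube}.
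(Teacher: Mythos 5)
Your argument is correct and matches the paper's intent: the paper offers no written proof beyond noting that the corollary follows from \cref{lem:finiteSubalgebra} together with \cref{finiteMedianAlgebraIsCube}, which is precisely the two-step argument you spell out (generated subalgebras are finite and cubical in one direction; local verification of the axioms inside a cube-complex median algebra in the other). Your remark that closure of $A$ under $\mu$ is what makes the nested terms in the associativity axiom land inside $A$ is the right point to flag, and it is handled correctly.
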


In other words, in order to understand median algebras it suffices to understand the median structures of finite $\cat(0)$ cubical complexes.

\subsection{Constructing metrics on finite median algebras} \label{subsec:constructingMetricsOnMedianAlgebras}
In this short subsection, we discuss how to describe median metrics on finite median algebras.
This will be needed in \cref{subsec:rectifying,sec:approxFiniteSubsets}.

We begin with a useful observation which appears in~\cite[Section 2]{bowditch:somePropertiesOfMedianSpaces}.
\begin{lem} \label{lem:medianCriterion}
 Let $(X, \mu)$ be a median algebra and $\rho \colon X \times X \to \R$ any metric on $X$.
 Suppose that for all $x,y,z \in X$ with $z = \mu(x,y,z)$, we have $\rho(x,y) = \rho(x,z) + \rho(z,y)$.
 Then $(X,\rho)$ is a median metric space with intrinsic median operation $\mu_\rho = \mu$.
\end{lem}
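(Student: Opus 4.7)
The hypothesis is precisely the statement that $\Intv_X^\mu(x,y) \subseteq \Intv_X^\rho(x,y)$ for all $x, y \in X$: saying $\mu(x,y,z) = z$ puts $z$ in the median interval, and the hypothesis translates this into $\rho(x,y) = \rho(x,z) + \rho(z,y)$, i.e.\ $z$ lies in the metric interval. The plan is then to show that for every triple $x_1, x_2, x_3 \in X$, the element $m := \mu(x_1, x_2, x_3)$ is the unique point of $\bigcap_{i=1}^{3} \Intv_X^\rho(x_i, x_{i+1})$ (indices mod $3$). Existence is immediate: it is a standard consequence of the median algebra axioms (using distributivity together with $\mu(y,y,z) = y$) that $m$ lies in each $\Intv_X^\mu(x_i, x_{i+1})$, hence by the inclusion above also in each $\Intv_X^\rho(x_i, x_{i+1})$.

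The real work is uniqueness. Given $m' \in \bigcap_{i=1}^{3} \Intv_X^\rho(x_i, x_{i+1})$, the key move is to set $p := \mu(x_1, x_2, m')$. By the same standard fact, $p$ lies simultaneously on the three median segments $\Intv_X^\mu(x_1, x_2)$, $\Intv_X^\mu(x_1, m')$, and $\Intv_X^\mu(x_2, m')$, and hence by the hypothesis also on the corresponding three metric segments, yielding
\begin{align*}
 \rho(x_1, x_2) &= \rho(x_1, p) + \rho(p, x_2), \\
 \rho(x_1, m') &= \rho(x_1, p) + \rho(p, m'), \\
 \rho(x_2, m') &= \rho(x_2, p) + \rho(p, m').
\end{align*}
Adding the last two lines and substituting $\rho(x_1, p) + \rho(p, x_2) = \rho(x_1, x_2)$ gives $\rho(x_1, m') + \rho(x_2, m') = \rho(x_1, x_2) + 2\rho(p, m')$; comparing with $m' \in \Intv_X^\rho(x_1, x_2)$ forces $\rho(p, m') = 0$. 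Hence $m' = p \in \Intv_X^\mu(x_1, x_2)$, and the same reasoning applied to the pairs $(x_2, x_3)$ and $(x_3, x_1)$ places $m'$ in all three median intervals. The standard uniqueness of the median in a median algebra then identifies $m'$ with $\mu(x_1, x_2, x_3) = m$, finishing both the uniqueness statement and the identity $\mu_\rho = \mu$, which is built into the existence step.

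The only delicate point is the choice of the auxiliary projection $p = \mu(x_1, x_2, m')$: it has to lie on $\Intv_X^\mu(x_1, x_2)$ and between $m'$ and each of $x_1, x_2$ at the same time, so that the hypothesis generates three metric additivity relations whose combination pins down $\rho(p, m') = 0$. Everything else is formal manipulation of the axioms.
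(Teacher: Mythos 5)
Your argument is correct and complete, but note that the paper itself offers no proof of this lemma: it is stated as an observation imported from Bowditch's \emph{Some properties of median metric spaces} (Section 2), so there is no in-paper argument to match against. Your reformulation of the hypothesis as the inclusion $\Intv_X^\mu(x,y) \subseteq \Intv_X^\rho(x,y)$ is the right starting point, and the existence half is indeed just the standard fact that $\mu(x_1,x_2,x_3)$ lies in each $\Intv_X^\mu(x_i,x_{i+1})$ (a one-line consequence of the distributivity axiom). The projection $p = \mu(x_1,x_2,m')$ is exactly the right device for uniqueness: it sits on the three median intervals $\Intv_X^\mu(x_1,x_2)$, $\Intv_X^\mu(x_1,m')$, $\Intv_X^\mu(x_2,m')$, the hypothesis converts these into three metric additivity relations, and comparison with $m' \in \Intv_X^\rho(x_1,x_2)$ forces $\rho(p,m') = 0$; it is worth observing that this step uses only the single membership $m' \in \Intv_X^\rho(x_1,x_2)$, not the full triple intersection, which makes the argument cleaner than it might first appear. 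The final reduction to the purely algebraic identity $\bigcap_{i}\Intv_X^\mu(x_i,x_{i+1}) = \{\mu(x_1,x_2,x_3)\}$ is legitimate --- this is standard (it is part of the interval characterization of median algebras invoked in \cref{rem:medianMetricIsMedianAlgebra}, and can also be checked directly: if $w$ lies in all three median intervals then $w = \mu(x_i,\mu(x_1,x_2,x_3),w)$ for each $i$, and one application of distributivity to $\mu(\mu(x_1,x_2,x_3),w,\mu(x_1,x_2,x_3))$ yields $w = \mu(x_1,x_2,x_3)$). So your proof is a faithful, self-contained reconstruction of the argument the paper delegates to its reference.
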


Now let $(M,\mu)$ be a finite median algebra and $l \colon \walls(M) \to \Rgr$ any map.
Then we define a distance function $d_l \colon M \times M \to \Rgeq$ by 
\begin{equation}
 d_l(a,b) = \sum_{W \in \walls^M(a|b)} l(W).
\end{equation}

\begin{prop} \label{prop:constructingMetricsOnMedianAlgebras}
 The map $d_l$ is a median metric on $M$ with intrinsic median operation $\mu_{d_l} = \mu$.
\end{prop}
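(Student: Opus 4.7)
The plan is to verify the metric axioms for $d_l$ directly from the convexity of the half-spaces bounding walls, and then invoke \cref{lem:medianCriterion}. Concretely, the key technical step is to establish the wall decomposition
\begin{equation*}
 \walls^M(a \mid b) = \walls^M(a \mid c) \sqcup \walls^M(c \mid b) \qquad \text{whenever } c \in \Intv_M^\mu(a,b),
\end{equation*}
from which the additivity $d_l(a,b) = d_l(a,c) + d_l(c,b)$ is immediate, and the assumption in \cref{lem:medianCriterion} is satisfied.

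First I would check that $d_l$ is a metric. Symmetry and $d_l(a,a)=0$ are obvious. For positive definiteness, if $a \neq b$ then the general separation result for median algebras (any two disjoint convex sets, in particular two singletons, can be separated by a wall) guarantees at least one wall in $\walls^M(a \mid b)$, and since $l$ takes values in $\Rgr$, the sum is strictly positive. For the triangle inequality, observe that any wall $\{h, h^c\}$ separating $a$ and $c$ places $b$ in either $h$ or $h^c$, hence separates $b$ from one of $a,c$; therefore $\walls^M(a \mid c) \subseteq \walls^M(a \mid b) \cup \walls^M(b \mid c)$ and summing $l$ over both sides yields the triangle inequality.

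Next I would prove the wall decomposition. Assume $c = \mu(a,b,c)$. Let $\{h, h^c\}$ be a wall separating $a$ from $c$, say $a \in h$ and $c \in h^c$. If we also had $b \in h$, then since $h$ is convex the characterization of convex subsets in median algebras would force $\mu(a,b,c) \in h$, contradicting $c \in h^c$. Hence $b \in h^c$, showing $\walls^M(a \mid c) \subseteq \walls^M(a \mid b)$; symmetrically $\walls^M(c \mid b) \subseteq \walls^M(a \mid b)$. The two sets are disjoint: a wall simultaneously separating $a$ from $c$ and $c$ from $b$ would place $a,b$ on the same side, hence not separate them, contradicting the previous inclusions. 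Finally, a wall in $\walls^M(a \mid b)$ must have $c$ on one of its two sides, so lies in $\walls^M(a \mid c) \cup \walls^M(c \mid b)$. Thus the decomposition is a disjoint union.

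Summing $l$ over the wall decomposition gives $d_l(a,b) = d_l(a,c) + d_l(c,b)$ for every $c$ with $c = \mu(a,b,c)$, which is precisely the hypothesis of \cref{lem:medianCriterion}. Applying that lemma concludes that $d_l$ is a median metric and that its intrinsic median operation coincides with $\mu$. The only slightly delicate step is the wall decomposition argument; everything else reduces to combinatorics of separating walls and the positivity of $l$.
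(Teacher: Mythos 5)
Your proof is correct and follows essentially the same route as the paper: establish $\walls^M(a\mid b)=\walls^M(a\mid c)\sqcup\walls^M(c\mid b)$ for $c\in\Intv_M^\mu(a,b)$ and feed the resulting additivity into \cref{lem:medianCriterion}. The only difference is that the paper asserts this wall decomposition (and the triangle-inequality inclusion) without proof, whereas you supply the convexity argument in full.
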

\begin{sloppypar}
\begin{proof}
Clearly, $d_l$ is a metric (for the triangle inequality, observe that $\walls^M(x|z) \subseteq \walls^M(x|y) \cup \walls^M(y|z)$).
Moreover, if $x,y,z \in M$ with $z = \mu(x,y,z)$, then $\walls^M(x|y) = \walls^M(x|z) \sqcup \walls^M(z|y)$, so $d_l(x,y) = d_l(x,z) + d_l(y,z)$. Consequently, \cref{lem:medianCriterion} implies that $(M, d_l)$ is a median metric space with intrinsic median operation equal to $\mu$.
\end{proof}
\end{sloppypar}

Geometrically, one can picture this metric as being obtained from $\cube^{(1)}(M)$ by rescaling all edges which intersect a wall $W$ so that they have length $l(W)$ for all $W \in \walls(M)$.

\subsection{\texorpdfstring{From median metric spaces to $\cat(0)$ spaces}{From median metric spaces to CAT(0) spaces}} \label{subsec:fromMedianToCAT0}
Here we review a recent construction due to Bowditch~\cite[Section 7]{bowditch:somePropertiesOfMedianSpaces} which turns connected and complete median metric spaces into $\cat(0)$ spaces.
This allows us to apply the usual fixed point lemma for group actions with bounded orbits on $\cat(0)$ spaces in  the setting of these median metric spaces.

Suppose that $(X,d)$ is a complete and connected median metric space such that the median algebra $(X,\mu_d)$ has rank at most $n < \infty$. 
Let $Q \subseteq X$ be some $m$-cube, $m \leq n$ (cf.~\cref{subsec:medianAlgebras}).
Then $Q$ inherits a median metric from $(X,d)$ which comes from a function $l \colon \walls(Q) \to \Rgr$ as in \cref{subsec:constructingMetricsOnMedianAlgebras}.
Define $\omega(Q) = \sqrt{\sum_{W \in \walls(Q)} l(W)^2}$.

We say that $\{x,y\} \subseteq X$ is a \emph{diagonal} of a cube $Q \subseteq X$ if $x, y \in Q$ and $Q \subseteq \Intv_X^d(x,y)$.
A cube $Q$ is a \emph{maximal cube with diagonal $\{x,y\}$} if for all cubes $Q^\prime \supseteq Q$ for which $\{x,y\}$ is a diagonal, we have $Q = Q^\prime$.
For each $x,y \in X$, there is a \emph{unique} maximal cube $Q(x,y) \subseteq X$ (of dimension $\leq n$) with diagonal $\{x,y\}$, cf.~\cite[Section 5]{bowditch:somePropertiesOfMedianSpaces}.

Let $\mathbf{x} = (x_i)_{i=0}^N \subset X$ be some finite sequence in $X$ and define $\omega(\mathbf{x}) = \sum_{i=1}^N \omega(Q(x_{i-1},x_i))$.
Finally, for each $x,y \in X$, we set
\begin{equation*}
 \sigma_d(x,y) = \inf_{\mathbf{x}} \omega(\mathbf{x}),
\end{equation*}
where the infimum is taken over all finite sequences $\mathbf{x} = (x_i)_{i=0}^N$ with $x_0 = x$ and $x_N = y$.
The important fact~\cite[Theorem 1.1]{bowditch:somePropertiesOfMedianSpaces} is that $\sigma_d$ is a $\cat(0)$ metric on $X$ which satisfies $\frac{d(x,y)}{\sqrt{n}} \leq \sigma_d(x,y) \leq d(x,y)$ for all $x,y \in X$.

Note that $\sigma_d$ is defined canonically in terms of the median metric structure of $(X,d)$.
In particular, every isometry of $(X,d)$ is also an isometry of $(X, \sigma_d)$.

\begin{cor} \label{cor:fixedPointForMedianMetricSpace}
 Let a group act by isometries on a complete and connected median metric space of finite rank.
 Then the action has a fixed point if and only if it has bounded orbits.
\end{cor}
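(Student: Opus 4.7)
The plan is to reduce the statement to the well-known Bruhat--Tits / Cartan fixed point lemma for complete $\cat(0)$ spaces by transporting the action to Bowditch's canonical $\cat(0)$ metric $\sigma_d$ recalled just above the corollary. One direction is trivial: if the action has a fixed point, then every orbit is a single point, hence bounded. So assume conversely that some (equivalently, every) orbit in $(X,d)$ is bounded.

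First I would observe that the identity map $(X,d) \to (X,\sigma_d)$ is bi-Lipschitz thanks to the estimate $d(x,y)/\sqrt{n} \leq \sigma_d(x,y) \leq d(x,y)$ from~\cite[Theorem 1.1]{bowditch:somePropertiesOfMedianSpaces} recalled above. From this I immediately get two things: (a) bounded subsets of $(X,d)$ are the same as bounded subsets of $(X,\sigma_d)$, so the orbit we started with is still bounded after passing to the new metric; and (b) $(X,\sigma_d)$ is complete, since $(X,d)$ is complete by assumption and Cauchy sequences for the two metrics coincide.

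Next, since $\sigma_d$ is defined canonically out of the median metric structure of $(X,d)$, every isometry of $(X,d)$ is automatically an isometry of $(X,\sigma_d)$ (as noted in the paragraph preceding the corollary). Therefore the given group continues to act by isometries on the complete $\cat(0)$ space $(X,\sigma_d)$, and this action has a bounded orbit.

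Finally I would invoke the standard fixed point lemma for isometric actions with bounded orbits on complete $\cat(0)$ spaces (take the circumcenter of the closure of a bounded orbit, which is unique and therefore fixed by the group). This produces a point of $X$ fixed by every group element, and since the underlying set of $(X,\sigma_d)$ is $X$ itself, this is simultaneously a fixed point in $(X,d)$, completing the proof. There is no real obstacle here beyond quoting the two ingredients; the only thing to be slightly careful about is confirming that the bi-Lipschitz bound transfers both boundedness and completeness between the two metrics, which is immediate.
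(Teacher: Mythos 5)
Your proof is correct and follows exactly the route the paper takes: it transports the action to Bowditch's canonical $\cat(0)$ metric $\sigma_d$, uses the bi-Lipschitz bounds to transfer boundedness and completeness, and then applies the standard fixed point lemma for bounded isometric actions on complete $\cat(0)$ spaces. The paper's own proof is just a one-line citation of that lemma ``in view of the previous discussion,'' so you have merely made the same argument explicit.
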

\begin{proof}
 In view of the previous discussion, this is a direct consequence of the fixed point lemma for actions with bounded orbits on complete $\cat(0)$ spaces~\cite[Part II, Corollary 2.8]{bridson_haefliger:nonPositiveCurvature}.
\end{proof}

\section{Metric median algebras} \label{sec:metricMedian}
\begin{defi}
\begin{definlist}
 \item  A \emph{metric median algebra} is a triple $(X,d, \median)$, where $(X, \median)$ is a median algebra and $(X,d)$ is a metric space such that $\median \colon X \times X \times X \to X$ is continuous with respect to the topology induced by the metric $d$.

  \item  If $(X,d,\median)$ is a metric median algebra, we write $\isom(X,d,\median)$ for the group of all isometries $f \colon (X,d) \to (X,d)$ which leave $\median$ invariant (that is, $f \circ \median = \median \circ (f \times f \times f)$).
  We call the elements of $\isom(X,d,\median)$ \emph{isometric automorphisms} of $(X,d,\mu)$.
\end{definlist}
\end{defi}
Every median metric space is a metric median algebra.
In fact, the intrinsic median operation of a median metric space is $1$-Lipschitz~\cite[Corollary 2.15]{chatterji_drutu_haglund:haagerupMedian}.

In \cref{subsec:rectifying}, we show that for a certain class of metric median algebras, including the ones considered in~\cite{bowditch:embeddingMedianAlgebrasIntoTrees}, one can construct a median metric which recovers the prescribed median algebra structure.
Moreover, the construction is canonical in the sense that the new metric is invariant with respect to all isometric automorphisms of the original metric median algebra.

In \cref{subsec:aFixedPointTheorem}, we apply these considerations together with the construction of Bowditch we have summarized in \cref{subsec:fromMedianToCAT0} in order to obtain a fixed point theorem for Kazhdan groups acting on metric median algebras.

\subsection{Rectifying metric median algebras} \label{subsec:rectifying}
Our following construction is inspired by the ideas developed in~\cite{bowditch:embeddingMedianAlgebrasIntoTrees}.

Let us fix a metric median algebra $(X,d,\mu)$.
We write $\finMedSub(X, \median)$ for the set of all finite median subalgebras of $(X,\median)$.
Since the median subalgebra generated by a finite subset is itself finite (cf.~\cref{lem:finiteSubalgebra}), $\finMedSub(X, \median)$ is a directed set with respect to the inclusion relation.

Let $M \in \finMedSub(X, \median)$.
Then for each edge $e$ in the $1$-skeleton of the $\cat(0)$ cubical complex $\cube(M)$, we define $\lambda^{M}(e) := d(e_-,e_+)$, where $e_\pm \in M$ are the boundary vertices of $e$.
Given a wall $W \in \walls(M)$, we consider its ``maximal thickness'',
 \begin{equation*}
  \lambda_{\max}^M(W) := \max_{e \pitchfork W} \lambda^M(e),
 \end{equation*}
 where the maximum is taken over all edges $e$ which intersect $W$ (recall that median walls correspond exactly to geometric hyperplanes in the cube complex).
 Notice that --- in contrast to~\cite{bowditch:embeddingMedianAlgebrasIntoTrees} --- we use the maximum here instead of the minimum. 
 This will be more convenient for our construction.
 
 For each $x,y \in X$ and $M \in \finMedSub(X, \mu)$, we define,
  \begin{equation}
  d_\median^M(x,y) = \begin{cases}
                  \sum_{W \in \walls^M(x|y)} \lambda^M_{\max}(W) & \text{if $x \in M$ and $y \in M$,} \\
                  0 & \text{otherwise.}
                 \end{cases} \label{eq:metricModFin}
 \end{equation}
 Note that we always have $d(x,y) \leq d^M_\median(x,y)$ as long as $x,y \in M$.
 We also observe that
  \begin{equation}
   d_\median^{g(M)}(g(x),g(y)) = d_\median^M(x,y), \label{eq:finEqui}
  \end{equation}
    for all $M \in \finMedSub(X,\mu)$, $x,y \in X$ and $g \in \isom(X,d,\median)$.
    However, for this to be true it is crucial that $g$ preserves both the metric and the median structure.

For each $M \in \finMedSub(X, \mu)$, the restriction of $d_\mu^M$ to $M \times M$ is a distance function of the type considered \cref{subsec:constructingMetricsOnMedianAlgebras}. 
 Thus, \cref{prop:constructingMetricsOnMedianAlgebras} implies that it is a median metric on $M$ with intrinsic median operation equal to the appropriate restriction of $\mu$.
 
 \begin{defi} \label{defi:rectifiable}
   Let $(X,d, \median)$ be a metric median algebra. For $x,y \in X$, we define
   \begin{equation}
    d_\median(x,y) = \sup \left\{ d_\median^M(x,y) \middlemid M \in \finMedSub(X,\mu) \right\} \in [0, \infty]. \label{eq:metricMod}
   \end{equation}
   We call a metric median algebra $(X,d,\median)$ \emph{rectifiable} if $d_\median(x,y) < \infty$ for all $x,y \in X$.
   
   Moreover, we say $(X,d,\median)$ is \emph{uniformly rectifiable} if there exists a constant $L \in \Rgeq$ such that $d_\median(x,y) \leq L d(x,y)$ for all $x,y \in X$.
 \end{defi} 
 
 \Cref{eq:finEqui} implies that $d_\mu(g(x),g(y)) = d_\mu(x,y)$ for all $x,y \in X$ and $g \in \isom(X,d,\mu)$.
 In other words, $d_\mu$ is $\isom(X,d,\mu)$-invariant.
 
 We also note that if $(X,d)$ is a median metric space with intrinsic median operation $\mu$, then $d_\mu = d$.

 The main result of this subsection is the following.
 \begin{prop} \label{prop:deformMetric}
 Let $(X,d, \median)$ be a rectifiable metric median algebra. Then $d_\median$ as in \labelcref{eq:metricMod} defines an $\isom(X,d,\mu)$-invariant median metric on $X$. 
 Its intrinsic median operation coincides with $\median$.
 Moreover, if $(X,d,\median)$ is uniformly rectifiable, then $d_\median$ is bi-Lipschitz equivalent to the original metric $d$.
\end{prop}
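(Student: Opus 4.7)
My plan is to reduce all four assertions (metric axioms, intrinsic median $=\mu$, invariance, and bi-Lipschitz bound) to a single technical monotonicity statement: whenever $M_1 \subseteq M_2$ are finite subalgebras of $(X,\mu)$ and $x,y \in M_1$, we have $d_\mu^{M_1}(x,y) \leq d_\mu^{M_2}(x,y)$. Once monotonicity is available, the supremum \labelcref{eq:metricMod} behaves as a supremum over an increasing net indexed by the directed set $\finMedSub(X,\mu)$, so each property can be verified first on an individual finite $M$ --- where $d_\mu^M$ is already a median metric by \cref{prop:constructingMetricsOnMedianAlgebras} --- and then passed to the limit.

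The main obstacle I expect will be the monotonicity step itself, for which the key observation is the restriction map on walls. Any $W_2 = \{H, H^c\} \in \walls^{M_2}(x|y)$ restricts to a convex partition $\{H \cap M_1, H^c \cap M_1\}$ of $M_1$; since $x, y \in M_1$ lie on opposite sides, this is a wall $\phi(W_2) \in \walls^{M_1}(x|y)$, giving a well-defined map $\phi\colon \walls^{M_2}(x|y) \to \walls^{M_1}(x|y)$ with disjoint fibers. Given $W_1 \in \walls^{M_1}(x|y)$, I would pick an edge $(a,b)$ in $\cube(M_1)$ crossing $W_1$ that realizes $d(a,b) = \lambda^{M_1}_{\max}(W_1)$; since $W_1$ is the unique wall of $M_1$ separating $a$ from $b$, every wall of $M_2$ separating them restricts to $W_1$, i.e.\ $\walls^{M_2}(a|b) \subseteq \phi^{-1}(W_1)$. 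Combining this inclusion with the standing inequality $d(a,b) \leq d_\mu^{M_2}(a,b)$ yields
\[
 \lambda^{M_1}_{\max}(W_1) = d(a,b) \leq \sum_{W \in \walls^{M_2}(a|b)} \lambda^{M_2}_{\max}(W) \leq \sum_{W \in \phi^{-1}(W_1)} \lambda^{M_2}_{\max}(W),
\]
and summing over $W_1 \in \walls^{M_1}(x|y)$ together with the disjointness of the fibers gives the desired monotonicity.

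From here everything follows formally. Symmetry of $d_\mu$ is inherited from each $d_\mu^M$; for $x \neq y$ the subalgebra $\langle\{x,y\}\rangle$ is finite by \cref{lem:finiteSubalgebra} and contributes $d_\mu^{\langle x,y\rangle}(x,y) \geq d(x,y) > 0$, so $d_\mu(x,y) > 0$. For the triangle inequality I would take the supremum of $d_\mu^M(x,z) \leq d_\mu^M(x,y) + d_\mu^M(y,z)$ over $M \ni x,y,z$, using monotonicity to identify $\sup_M d_\mu^M(\cdot,\cdot)$ with $d_\mu(\cdot,\cdot)$ and the fact that for two monotone bounded nets over a directed set the supremum of the sum equals the sum of the suprema. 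For the median identity, whenever $z = \mu(x,y,z)$ and $M \ni x,y,z$, the wall decomposition $\walls^M(x|y) = \walls^M(x|z) \sqcup \walls^M(z|y)$ gives $d_\mu^M(x,y) = d_\mu^M(x,z) + d_\mu^M(z,y)$; taking suprema and invoking \cref{lem:medianCriterion} then shows that $(X, d_\mu)$ is a median metric space with intrinsic median $\mu$. Invariance under $\isom(X,d,\mu)$ follows from \labelcref{eq:finEqui} because such an automorphism permutes $\finMedSub(X,\mu)$ bijectively. Finally, under uniform rectifiability the upper bound $d_\mu \leq L \cdot d$ holds by hypothesis, while the universal inequality $d(x,y) \leq d_\mu^M(x,y) \leq d_\mu(x,y)$ gives the reverse direction, establishing bi-Lipschitz equivalence.
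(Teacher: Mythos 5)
Your proposal is correct and follows essentially the same route as the paper: the heart of the argument is the monotonicity of $M \mapsto d_\mu^M(x,y)$, proved via the restriction map on walls (your $\phi$ is the paper's $\iota^{\walls}$) together with the inequality $d(a,b) \leq d_\mu^{M_2}(a,b)$, which is exactly the paper's combinatorial-geodesic estimate in $\cube(N)$; the remaining assertions are then obtained by passing to the limit of the increasing net, invoking \cref{lem:medianCriterion} and \labelcref{eq:finEqui} just as the paper does.
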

 The key to the proof of the proposition is the following lemma.

\begin{lem} \label{lem:monotonic}
   Let $(X,d,\median)$ be a metric median algebra and $M, N \in \finMedSub(X,\median)$ satisfying $M \subseteq N$. 
   Then we have $d_\median^M(x,y) \leq d_\median^N(x,y)$ for all $x,y \in X$.
   In other words, the net $M \mapsto d_\median^M(x,y)$ (defined on the directed set $\finMedSub(X,\median)$) is monotonically non-decreasing for fixed $x,y \in X$.
  \end{lem}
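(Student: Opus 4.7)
If $x \notin M$ or $y \notin M$ then $d_\median^M(x,y) = 0$ and the inequality is immediate, so I will focus on the non-trivial case $x, y \in M \subseteq N$. The strategy is to compare the sums defining $d_\median^M(x,y)$ and $d_\median^N(x,y)$ (see~\labelcref{eq:metricModFin}) wall by wall, via the restriction of walls from $N$ to $M$. Given any $\tilde{W} = \{\tilde{h}, \tilde{h}^c\} \in \walls^N(x|y)$, the intersections $\tilde{h} \cap M$ and $\tilde{h}^c \cap M$ are convex in $M$ (convexity of the halves is inherited from $N$, since median intervals in a subalgebra agree with those of the ambient algebra) and both non-empty because they contain $x$ and $y$ respectively. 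This gives a well-defined restriction map $r \colon \walls^N(x|y) \to \walls^M(x|y)$ and therefore a partition $\walls^N(x|y) = \bigsqcup_{W \in \walls^M(x|y)} r^{-1}(W)$.

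The key step will be the fibrewise estimate
\[
 \lambda_{\max}^M(W) \leq \sum_{\tilde{W} \in r^{-1}(W)} \lambda_{\max}^N(\tilde{W}) \qquad \text{for each } W \in \walls^M(x|y),
\]
since summing it over $W \in \walls^M(x|y)$ and using the partition above yields $d_\median^M(x,y) \leq d_\median^N(x,y)$. To prove this fibrewise estimate, I will pick an edge $e$ of $\cube^{(1)}(M)$ crossing $W$ with $\lambda^M(e) = \lambda_{\max}^M(W)$ and endpoints $e_-, e_+ \in M$. By the very definition of an edge in $\cube^{(1)}(M)$, the wall $W$ is the unique wall of $M$ separating $e_-$ from $e_+$; hence every wall of $N$ in $\walls^N(e_-|e_+)$ restricts to $W$, i.e.\ $\walls^N(e_-|e_+) \subseteq r^{-1}(W)$.

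To turn this inclusion into the numerical inequality, I will take a minimal edge path from $e_-$ to $e_+$ in $\cube^{(1)}(N)$. Its edges are in bijection with $\walls^N(e_-|e_+)$ (each edge crossing exactly one such wall), and the $d$-length of each edge is bounded above by $\lambda_{\max}^N$ of the wall it crosses. The triangle inequality for $d$ then gives
\[
 \lambda_{\max}^M(W) = d(e_-, e_+) \leq \sum_{\tilde{W} \in \walls^N(e_-|e_+)} \lambda_{\max}^N(\tilde{W}) \leq \sum_{\tilde{W} \in r^{-1}(W)} \lambda_{\max}^N(\tilde{W}),
\]
which is exactly the desired fibrewise estimate.

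The main obstacle will not be any deep inequality but rather the careful verification that the restriction map $r$ and the associated partition really behave as claimed: in particular that a wall of $N$ separating two points of $M$ restricts to a wall of $M$, and that edges of a minimal gallery in $\cube^{(1)}(N)$ are in bijection with the walls of $N$ it crosses. Both facts follow from standard properties of the correspondence between finite median algebras and finite $\cat(0)$ cube complexes recalled in \cref{finiteMedianAlgebraIsCube}.
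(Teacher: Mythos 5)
Your proposal is correct and follows essentially the same route as the paper: reduce to $x,y\in M$, restrict walls of $N$ to walls of $M$, and prove the fibrewise estimate by choosing an edge of $\cube^{(1)}(M)$ attaining $\lambda_{\max}^M(W)$ and applying the triangle inequality along a combinatorial geodesic in $\cube^{(1)}(N)$. The only cosmetic difference is that the paper defines the restriction map on all walls of $N$ inducing non-trivial walls on $M$ (so the relevant preimage is exactly $\walls^N(e_-|e_+)$), whereas you work with the map restricted to $\walls^N(x|y)$ and use an inclusion, which is equally valid since all terms are positive.
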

  \begin{proof}
  We may assume that both $x$ and $y$ are contained in $M$, otherwise $d_\median^M(x,y) = 0$ and the assertion is clear.
  
  Consider the set of walls  $\walls(M \subset N) := \bigcup_{a,b \in M} \walls^N(a|b) \subseteq \walls(N)$.
  In other words, $\walls(M \subset N)$ is the set of walls in $N$ which induce non-trivial walls on $M$.
  More precisely, there is a map $\iota^\walls \colon \walls(M \subset N) \to \walls(M)$, which in terms of half spaces is defined by $\iota^\walls(\{h,h^c\}) = \{h \cap M, h^c \cap M\}$.
  Observe that for every $a,b \in M$, 
  \begin{equation}
   \walls^N(a|b) = (\iota^{\walls})^{-1}(\walls^M(a|b)). \label{eq:wallPreimage}
  \end{equation}

  We claim that for each $W \in \walls(M)$, we have,
  \begin{equation}
   \lambda_{\max}^M(W) \leq \sum_{V \in (\iota^{\walls})^{-1}(W)} \lambda_{\max}^N(V). \label{eq:monotonicForWalls}
  \end{equation}
  Indeed, choose elements $a,b \in M$ which are adjacent in the cube complex $\cube(M)$, are separated by $W$, and attain the maximum $d(a,b) = \lambda_{\max}^M(W)$. Let $a = a_0, a_1, \dotsc, a_N = b \subseteq N = \cube^{(0)}(N)$ be a combinatorial geodesic connecting $a$ to $b$ in $\cube(N)$.
  By the triangle inequality, we conclude,
  \begin{equation*}
      \lambda_{\max}^M(W) = d(a,b) \leq \sum_{i=1}^N d(a_{i-1},a_i) \leq \sum_{V \in (\iota^{\walls})^{-1}(W)} \lambda_{\max}^N(V),
  \end{equation*}
  where the second inequality follows from \labelcref{eq:wallPreimage} (using $\walls^M(a|b) = \{W\}$) and the definition of $\lambda_{\max}^N$.
  
  Using \labelcref{eq:monotonicForWalls,eq:wallPreimage}, we finish the proof of \cref{lem:monotonic},
  \begin{align*}
   d_\median^M(x,y) = \sum_{W \in \walls^M(x|y)} \lambda^M_{\max}(W) &\leq \sum_{W \in \walls^M(x|y)} \sum_{V \in (\iota^{\walls})^{-1}(W)} \lambda_{\max}^N(V) \\
   &= \sum_{V \in (\iota^{\walls})^{-1}(\walls^M(x|y))} \lambda_{\max}^N(V) \\
   &= d_\median^N(x,y). \qedhere
  \end{align*}
  \end{proof}

\begin{proof}[Proof of \cref{prop:deformMetric}]
Abbreviate $\finMedSub := \finMedSub(X,\median)$.
By \cref{lem:monotonic}, the net of real numbers $\left( d_\median^M(x,y) \right)_{M \in \finMedSub}$ is monotonically non-decreasing for fixed $x,y$. 
By assumption, it is also bounded above, whence its limit exists and
  \begin{equation}
   d_\median(x,y) = \lim_{\mathscr{M}} d_\median^M(x,y). \label{eq:metricAsLimit}
  \end{equation}
 It is clear that $d_\mu$ is a metric and the $\isom(X,d,\mu)$-invariance has already been noted in the discussion before the proposition.
 
 To see that $d_\mu$ is a median metric, let $x,y,z \in X$ with $z = \mu(x,y,z)$.
 Since $d_\median^M$ is a median metric on $M$ with median operation $\mu$, we have
 $d_\median^M(x,z) + d_\median^M(z,y) = d_\median^M(x,y)$ for all $M \in \finMedSub$ with $x,y,z \in M$.
 This equality passes to the limit in \labelcref{eq:metricAsLimit}, whence the hypothesis of \cref{lem:medianCriterion} is satisfied for $\rho = d_\median$.
 We conclude that $(X,d_\mu)$ is a median metric space with intrinsic median operation equal to $\mu$.
 
  If $(X,d,\mu)$ is uniformly rectifiable with constant $L$, then $d(x,y) \leq d_\mu(x,y) \leq L d(x,y)$ for all $x,y \in X$, which proves the last statement.
\end{proof}

We finish this section with a technical lemma which directly follows from Bowditch's work and will be required in \cref{sec:homosFromKazhdan}.

\begin{lem}[{cf.~\cite[Lemma 6.2]{bowditch:embeddingMedianAlgebrasIntoTrees}}] \label{lem:conditionsForBeingRecitfiable}
 Let $(X,d,\mu)$ be a metric median algebra of finite rank $n$.
 Suppose that 
 
 \begin{enumerate}[(i)]
           \item there exists a constant $k > 0$ such that \label{item:Lipschitz}
   \begin{gather*}
  d(\median(a,b,c), \median(a,b,\tilde{c})) \leq k d(c, \tilde{c}) \quad \text{for all $a,b,c,\tilde{c} \in X$,} 
  \end{gather*}
  
           \item there exists a constant $l \geq 1$ such that for all $x,y \in X$, there is an $l$-Lipschitz map $c \colon [0, d(x,y)] \to X$ with $c(0) = x$ and $c(d(x,y)) = y$. \label{item:almostGeodesic}
  \end{enumerate}

  Then $(X,d,\mu)$ is uniformly rectifiable with constant $L$ depending only on $n$,$k$ and $l$.
\end{lem}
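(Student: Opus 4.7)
The plan is to follow the strategy of Bowditch's proof of~\cite[Lemma 6.2]{bowditch:embeddingMedianAlgebrasIntoTrees}, which establishes the analogous result for the minimum-thickness convention. The key observation is that the argument is insensitive to which edge one picks in each parallel class of edges crossing a wall, so it adapts essentially verbatim once one selects, for every wall, an edge realizing the maximum thickness.

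Fix a finite subalgebra $M \in \finMedSub(X, \mu)$ with $x, y \in M$; the aim is to bound $d_\mu^M(x,y) = \sum_{W \in \walls^M(x|y)} \lambda_{\max}^M(W)$ by $L d(x,y)$ for some $L = L(n,k,l)$, which upon taking the supremum over $M$ would give uniform rectifiability. For each $W \in \walls^M(x|y)$ I would choose an edge $\{a_W, b_W\}$ of $\cube(M)$ separated by $W$ realizing $d(a_W, b_W) = \lambda_{\max}^M(W)$ and with $a_W$ on the same side of $W$ as $x$. Using (ii), fix an $l$-Lipschitz curve $c \colon [0, d(x,y)] \to X$ from $x$ to $y$, and define $G_W(t) := \mu(c(t), a_W, b_W)$. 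By (i), the map $G_W$ is $kl$-Lipschitz. Moreover, since $x, y, a_W, b_W \in M$ and $a_W, b_W$ are adjacent in $\cube(M)$ so that $\Intv_M(a_W, b_W) = \{a_W, b_W\}$, the majority rule across the wall $W$ in $M$ forces $G_W(0) = \mu(x, a_W, b_W) = a_W$ and $G_W(d(x,y)) = \mu(y, a_W, b_W) = b_W$. In particular $\mathrm{length}(G_W) \geq d(a_W, b_W) = \lambda_{\max}^M(W)$.

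The heart of the argument is a rank estimate controlling the total metric speed $\sum_W |G_W'(t)|$. Following Bowditch, one considers at each time $t$ the finite median subalgebra generated by $c(t)$ together with the chosen endpoints $\{a_W, b_W\}_W$; this has rank at most $n$, so its associated $\cat(0)$ cube complex has dimension at most $n$, and at any vertex at most $n$ mutually perpendicular edges emanate. This limits the infinitesimal motion of $c(t)$ to be a combination of at most $n$ simultaneous "wall-crossings", giving a bound
\begin{equation*}
 \sum_{W \in \walls^M(x|y)} \bigl|G_W'(t)\bigr| \leq C(n,k,l)
\end{equation*}
for almost every $t$. Integrating over $[0, d(x,y)]$ then yields $\sum_W \lambda_{\max}^M(W) \leq C(n,k,l) \cdot d(x,y)$, independently of $M$.

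The main obstacle is making this rank-$n$ instantaneous estimate rigorous and uniform in $M$. Turning the heuristic "only $n$ pairwise independent wall-crossings at once" into an actual quantitative bound requires decomposing the infinitesimal motion of $c$ along the walls separating $x$ from $y$, and exploiting the cube-complex structure of auxiliary finite subalgebras; this is exactly the content of~\cite[Lemma 6.2]{bowditch:embeddingMedianAlgebrasIntoTrees}, and inspection of that argument shows it does not depend on whether the representative edge for each wall is chosen to minimize or maximize the thickness. Hence the same constant $L = L(n,k,l)$ works in our setting, proving uniform rectifiability.
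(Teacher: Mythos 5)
Your route is genuinely different from the paper's, and the difference matters. The paper does \emph{not} re-run Bowditch's argument with the maximum convention: it invokes \cite[Lemma 6.2]{bowditch:embeddingMedianAlgebrasIntoTrees} strictly as a black box for the minimum-thickness quantity $d_{\mu,\min}^M$ (which is literally Bowditch's ``$\lambda$''), obtaining $d_{\mu,\min}^M(x,y) \leq \widetilde{L}(n,k,l)\, d(x,y)$, and then passes from $\min$ to $\max$ by an elementary observation: hypothesis (i) forces parallel edges to have comparable lengths, since for parallel edges $e=\{a,b\}$ and $\tilde e=\{\tilde a,\tilde b\}$ crossing the same wall one has $a=\mu(a,b,\tilde a)$ and $b=\mu(a,b,\tilde b)$ (the median lies in $\{a,b\}$ and in whichever half-space contains two of its arguments), so $d(a,b)\leq k\,d(\tilde a,\tilde b)$, hence $\lambda_{\max}^M(W)\leq k\,\lambda_{\min}^M(W)$ and $d_\mu^M\leq k\widetilde{L}\,d$. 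This two-line reduction is exactly what your proposal lacks. Your preparatory steps are fine --- the endpoint computation $G_W(0)=a_W$, $G_W(d(x,y))=b_W$ and the $kl$-Lipschitz bound on $G_W$ are correct --- but the heart of the matter, the rank-$n$ bound on simultaneous wall-crossings, is left as the assertion that ``inspection of Bowditch's argument shows it does not depend on whether the representative edge minimizes or maximizes the thickness.'' That is plausible, but it is precisely the step you would have to verify line by line inside someone else's proof, and as written it is a gap rather than a proof. Either supply that counting estimate in full, or (better) use the parallel-edge comparison above, which makes the verification unnecessary and yields the constant $L=k\widetilde{L}(n,k,l)$ directly.
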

\begin{proof}
 To see how the claim follows from Bowditch' work, we introduce the following temporary notation:
 Let $d_{\mu,\min}^M$ be defined in the same way as $d_\mu^M$, only we replace $\lambda_{\max}^M$ in \labelcref{eq:metricModFin} by
  \begin{equation*}
  \lambda_{\min}^M(W) := \min_{e \pitchfork W} \lambda^M(e), \quad M \in \finMedSub(X,\mu), W \in \walls(M).
 \end{equation*}
 Then $d_{\mu,\min}^M$ agrees with \enquote{$\lambda$} in the notation of~\cite{bowditch:embeddingMedianAlgebrasIntoTrees}.
 Now~\cite[Lemma 6.2]{bowditch:embeddingMedianAlgebrasIntoTrees} shows\footnote{The tacit assumption of colourability in Bowditch's work is not used in this lemma, since one could restrict to working with intervals in this case.} that there exists a constant $\widetilde{L} = \widetilde{L}(n,k,l)$ such that
 \begin{equation*}
  d_{\mu,\min}^M(x,y) \leq \widetilde{L} d(x,y)
 \end{equation*}
 for all $x,y \in X$.
 Moreover, assumption~\cref{item:Lipschitz} implies that $\lambda^M(e) \leq k \lambda^M(\tilde{e})$ for all pairwise parallel edges $e, \tilde{e}$ in the cube complex corresponding to $M$.
 In particular, $\lambda_{\max}^M(W) \leq k \lambda_{\min}^M(W)$ for all $M \in \finMedSub(X,\mu), W \in \walls(M)$.
 Consequently, $d_{\mu}^M(x,y) \leq k \widetilde{L} d(x,y)$ for all $x,y \in X$, and the same holds for $d_\mu$.
\end{proof}

\subsection{A fixed point theorem} \label{subsec:aFixedPointTheorem}

Applying the previous construction and \cref{subsec:fromMedianToCAT0}, we obtain the following fixed point results.

\begin{prop} \label{prop:fixedPoint}
 Let $(X,d,\mu)$ be a uniformly rectifiable metric median algebra of finite rank and suppose that $(X,d)$ is complete and connected.
 Let $G$ be a group acting by isometric automorphisms on $(X,d,\mu)$.
 If the action has bounded orbits, then it has a fixed point.
\end{prop}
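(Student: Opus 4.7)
The plan is to chain together the two main tools built up in the preceding sections: \cref{prop:deformMetric} rectifies the metric so that we genuinely have a median metric space, and \cref{cor:fixedPointForMedianMetricSpace} then supplies the fixed point. There is no new idea to invent here; the argument is essentially a bookkeeping check that the hypotheses propagate through the deformation.

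More precisely, first I would invoke \cref{prop:deformMetric}. Since $(X,d,\mu)$ is uniformly rectifiable, the proposition produces a metric $d_\mu$ on $X$ that is bi-Lipschitz equivalent to $d$, is invariant under $\isom(X,d,\mu)$, makes $(X,d_\mu)$ into a median metric space, and has intrinsic median operation equal to $\mu$. In particular, the $G$-action, being by isometric automorphisms of $(X,d,\mu)$, acts by isometries on $(X,d_\mu)$.

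Next I would verify the hypotheses of \cref{cor:fixedPointForMedianMetricSpace} for $(X,d_\mu)$. Completeness and connectedness of $(X,d_\mu)$ follow from the corresponding properties of $(X,d)$ because the two metrics are bi-Lipschitz equivalent, and the finite rank of $(X,d_\mu)$ as a median metric space is the rank of the underlying median algebra $(X,\mu)$, which is finite by assumption. Finally, bounded $G$-orbits in $(X,d)$ are bounded in $(X,d_\mu)$ for the same bi-Lipschitz reason.

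Applying \cref{cor:fixedPointForMedianMetricSpace} to the isometric $G$-action on the complete connected median metric space $(X,d_\mu)$ of finite rank then yields a global fixed point, which is of course a fixed point for the original action on $(X,d,\mu)$. The only step that might look subtle is confirming that the deformation $d \mapsto d_\mu$ respects the group action; but this is exactly the $\isom(X,d,\mu)$-invariance built into \cref{prop:deformMetric} via the equivariance identity \labelcref{eq:finEqui}, so no real obstacle arises.
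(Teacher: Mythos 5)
Your argument is correct and is essentially identical to the paper's proof: deform the metric via \cref{prop:deformMetric}, transfer completeness, connectedness, finite rank and bounded orbits across the bi-Lipschitz equivalence, and conclude with \cref{cor:fixedPointForMedianMetricSpace}. The paper's version is merely terser, leaving the bookkeeping of hypotheses implicit.
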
 
\begin{proof}
 By \cref{prop:deformMetric}, $G$ acts by isometries on $(X, d_\mu)$. Since $d$ and $d_\mu$ are bi-Lipschitz equivalent, the action has bounded orbits with respect to $d_\mu$ if it has with respect to $d$.
 So the proposition follows from \cref{cor:fixedPointForMedianMetricSpace}.
\end{proof}

\begin{thm} \label{thm:TfixedPoint}
 Every action of a group with Kazhdan's property (T) by isometric automorphisms on a complete, connected, uniformly rectifiable metric median algebra of finite rank has a fixed point.
\end{thm}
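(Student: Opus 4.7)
The plan is to combine the rectification machinery from \cref{subsec:rectifying} with the known fact that Kazhdan groups have bounded orbits when acting on median metric spaces, and then invoke the fixed point proposition already proven in this subsection.

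More concretely, let $G$ be a group with property (T) acting by isometric automorphisms on a complete, connected, uniformly rectifiable metric median algebra $(X,d,\mu)$ of finite rank. First I would apply \cref{prop:deformMetric} to obtain a median metric $d_\mu$ on $X$ which is $\isom(X,d,\mu)$-invariant and bi-Lipschitz equivalent to the original metric $d$. In particular, since $G$ acts on $X$ by elements of $\isom(X,d,\mu)$, it also acts by isometries on the median metric space $(X, d_\mu)$, and a subset of $X$ is bounded with respect to $d_\mu$ if and only if it is bounded with respect to $d$.

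Next I would invoke the theorem of Nica and Chatterji--Drutu--Haglund (already cited in the introduction) stating that every isometric action of a Kazhdan group on a median metric space has bounded orbits. Applied to the action of $G$ on $(X, d_\mu)$, this yields that all $G$-orbits are $d_\mu$-bounded, hence also $d$-bounded by bi-Lipschitz equivalence. Since the original action is by isometric automorphisms of $(X,d,\mu)$ and $(X,d)$ is complete and connected of finite rank, \cref{prop:fixedPoint} applies and produces a fixed point, completing the argument.

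There is no substantial obstacle beyond correctly assembling the pieces: the real work has been done in \cref{prop:deformMetric} (which converts the metric median algebra into a genuine median metric space while preserving symmetries) and in \cref{prop:fixedPoint} (which in turn relies on Bowditch's deformation of median metric spaces into $\cat(0)$ spaces recalled in \cref{subsec:fromMedianToCAT0}). The only point that deserves a brief verification is that the bi-Lipschitz equivalence between $d$ and $d_\mu$ correctly transfers the boundedness conclusion; this is immediate from the uniform rectifiability hypothesis, and it is essential because the Kazhdan-groups-on-median-spaces result is naturally phrased in terms of $d_\mu$, whereas \cref{prop:fixedPoint} is stated in terms of $d$.
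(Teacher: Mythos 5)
Your proposal is correct and follows essentially the same route as the paper: the paper likewise deduces bounded orbits from the Nica/Chatterji--Dru\c{t}u--Haglund result applied to the rectified median metric $d_\mu$ and then concludes via the argument of \cref{prop:fixedPoint}. The only (immaterial) difference is that you transfer boundedness back to $d$ and cite \cref{prop:fixedPoint} as a black box, whereas the paper simply reruns its proof.
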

\begin{proof}
Kazhdan's property (T) implies that any isometric action on a median metric space must have bounded orbits \cite[Theorem 1.3]{nica:groupActionsOnMedianSpaces}~\cite[Theorem 1.2]{chatterji_drutu_haglund:haagerupMedian}. 
Hence, the claim follows by the same argument as in the proof of \cref{prop:fixedPoint}.
\end{proof}

\section{Coarse median structures and quasi-isometries} \label{sec:qiInvariance}
Let $S$ be a set and $(X,d)$ a metric space.
Let $f, f^\prime \colon S \to X$ be some maps and $K \geq 0$. 
We say $f$ and $f^\prime$  are \emph{$K$-close}, and write $f \coarseEq_K f^\prime$, if $d(f(s), f^\prime(s)) \leq K$ for all $s \in S$.
Moreover, two such maps $f, f^\prime$ are said to be \emph{close}, denoted by $f \coarseEq f^\prime$, if they are $K$-close for some $K \geq 0$.
Closeness is an equivalence relation on $X^S$.

\begin{defi}
 Let $(X,d)$ be a metric space, $S$ a set, and $\mu_Y \colon Y^3 \to Y$ some ternary operation for each $Y \in \{ X, S \}$.
 A map $\phi \colon S \to X$ is called a \emph{$K$-quasi-morphism} if $  \phi \circ \mu_S \coarseEq_K \mu_X \circ (\phi \times \phi \times \phi)$.
\end{defi}

In terms of this notation, the definition of a coarse median (cf.~\cref{subsec:introToCoarseMedian}) reads as follows.
\begin{defi}[Coarse median~\cite{bowditch:coarseMedian}] \label{defi:coarseMedian}
 Let $(X, d)$ be a metric space. A ternary operation $\mu \colon X^3 \to X$ is called a \emph{coarse median} if there is a constant $k \in \Rgeq$ and a non-decreasing function $h \colon \N \to \Rgeq$ such that the following conditions are satisfied.
 
 \begin{definlist}
  \item For every $x, y, z, x^\prime, y^\prime, z^\prime \in X$, we have,
  \begin{equation*} 
   d\left( \mu(x, y, z), \mu(x^\prime, y^\prime, z^\prime) \right) \leq k \left( d(x,x^\prime) + d(y, y^\prime) + d(z, z^\prime) \right) + h(0).
  \end{equation*}
  
  \item For every finite non-empty subset $A \subseteq X$, there is a finite median algebra $(M, \mu_M)$ together with a $h(|A|)$-quasi-morphism $\lambda \colon M \to X$ and a map $\pi \colon A \to M$ such that 
  \begin{equation*}
   \iota_A \sim_{h(|A|)} \lambda \circ \pi,
  \end{equation*}
  where $\iota_A$ denotes the inclusion map $A \hookrightarrow X$.
 \end{definlist}
 We call $k$ and $h$ \emph{parameters} of the coarse median $\mu$.
 
 Furthermore, if it is possible to fix parameters such that the cube complexes $\cube(M)$ have dimension at most $n$ for all median algebras $M$ appearing in the definition, then we say that the coarse median $\mu$ has \emph{rank at most $n$}.
 If it has rank at most $n$ but not rank at most $n-1$, then we say it has \emph{rank $n$}.
\end{defi}

\begin{defi} \label{defi:cmStructures}
 A \emph{coarse median structure} on a metric space $X$ is a closeness class of coarse medians $\mu \colon X^3 \to X$.
 We write $[\mu]$ for the coarse median structure represented by a coarse median $\mu$.
\end{defi}

The concept of rank is well-defined on the level of coarse median structures.

\begin{rem} \label{rem:WLOGIfDontCareAboutRank}
 If we do not care about the rank, we may always assume (after changing the parameters) that $\iota_A = \lambda \circ \pi$ in \cref{defi:coarseMedian} \labelcref{item:cM2}.
 This has already been suggested in~\cite[Section 8]{bowditch:coarseMedian}; here we work out the claim explicitly and show that it suffices to allow for increasing the rank by one.
 
 Indeed, let $A \subseteq X$ be a finite subset of a coarse median space, and $\lambda \colon M \to X$, $\pi \colon A \to M$ as in \cref{defi:coarseMedian} \labelcref{item:cM2}. 
 Let $T$ be some finite tree with vertex set $T^{(0)} = A$, and $\mu_T \colon A^3 \to A$ the corresponding rank $1$ median structure.
 Define
 \begin{equation*}
  \tilde{\lambda} \colon M \times A \to X, \quad \tilde{\lambda}(y,a) = \begin{cases}
                                                                               a & \text{if $y = \pi(a)$,} \\
                                                                               \lambda(y) & \text{otherwise.}
                                                                              \end{cases}
 \end{equation*}
  Let $\proj_M \colon M \times A \to M$ be the projection on the first factor, and observe that by $\iota_A \sim_{h(|A|)} \lambda \circ \pi$, we have, 
  \begin{equation}
   \tilde{\lambda} \sim_{h(|A|)} \lambda \circ \proj_M. \label{eq:newLambdaEstimate}
  \end{equation}
  Moreover, define $\tilde{\pi} \colon A \to M \times A$ by $\tilde{\pi}(a) = (\pi(a),a)$ for all $a \in A$.  Clearly, we have $(\tilde{\lambda} \circ \tilde{\pi})(a) = a$ for all $a \in A$.
  Since $\lambda$ is an $h(|A|)$-quasi-morphism, it follows that
  \begin{equation*}
   \tilde{\lambda} \circ \mu_{M \times A} \sim_{h(|A|)} \lambda \circ \proj_{M} \circ \mu_{M \times A} = \lambda \circ \mu_{M} \circ \proj_M^{\times 3} \sim_{h(|A|)} \mu \circ \lambda^{\times 3} \circ \proj_M^{\times 3} =  \mu \circ (\lambda \circ \proj_M)^{\times 3},
  \end{equation*}
  where we have used the shorthand $f^{\times 3} := f \times f \times f$.
  By \labelcref{eq:newLambdaEstimate} and \cref{defi:coarseMedian} \labelcref{item:cM1}, we have $\mu \circ (\lambda \circ \proj_M)^{\times 3} \sim_{k h(|A|) + h(0)} \mu \circ \tilde{\lambda}$. Thus, $\tilde{\lambda}$ is an $((k+2) h(|A|) + h(0))$-quasi-morphism.
  This proves the claim, as we can replace $\lambda, \pi$ by $\tilde{\lambda}, \tilde{\pi}$ if we also replace $h$ by $n \mapsto (k+2)h(n) + h(0)$. 
  However, this construction has increased the rank of the median algebra on which our \enquote{new $\lambda$} is defined by one.
\end{rem}

\subsection{Quasi-isometry invariance}
The existence of a coarse median structure (of certain rank) is a quasi-isometry invariant of the underlying metric space~\cite[Lemma 8.1]{bowditch:coarseMedian}.
We formalize this observation by introducing the notions of ``pushforward'' and ``pullback'' of a coarse median structure via a quasi-isometry.

 \begin{defi} \label{defi:inducedMedian}
  Let $f \colon X \to Y$ be a quasi-isometry between two metric spaces and $[\mu_X]$, $[\mu_Y]$ coarse median structures on $X$, respectively on $Y$. We choose a quasi-inverse $g \colon Y \to X$ to $f$, and define the \emph{pushforward} of $[\mu_X]$ via $f$ by
  \begin{align*}
   f_* [\mu_X] :=& \left[f \circ \mu_X \circ (g \times g \times g) \right], \\
   \intertext{and similarly, the \emph{pullback} of $[\mu_Y]$ via $f$ by}
   f^* [\mu_Y] :=& \left[g \circ \mu_Y \circ (f \times f \times f) \right].
  \end{align*}
 \end{defi}
 One readily verifies that $f \circ \mu_X \circ (g \times g \times g)$ (respectively $g \circ \mu_Y \circ (f \times f \times f)$) are coarse medians on $Y$ (respectively $X$). Moreover, the coarse median structures represented by these expressions do not depend on the choice of quasi-inverse $g$ or the choice of representatives for $[\mu_X]$ and $[\mu_Y]$. Hence $f_* [\mu_X]$ and $f^* [\mu_Y]$ are well-defined coarse median structures.

 By straightforward estimates, we obtain the following formal properties.
\begin{prop} \label{prop:formalPropOfInducedCM}
  Under the notation of \cref{defi:inducedMedian}, we have:
  
  \begin{thminlist} 
   
   \item \label{item:inducedCMrank}$\rk f_* [\mu_X]= \rk [\mu_X]$ and $\rk f^* [\mu_Y] = \rk [\mu_Y]$.
   
   \item If $g$ is some quasi-inverse to $f$, then $f_* [\mu_X] = g^* [\mu_X]$ and $f^* [\mu_Y] = g_* [\mu_Y]$. \label{item:inducedCMSymmetric}
   
   \item \label{item:inducedCMfunctor} If $f_1 \colon X \to Z$ and $f_2 : Z \to Y$ are quasi-isometries, then $ (f_2 \circ f_1)_* [\mu_X] = {f_2}_* {f_1}_* [\mu_X]$ and $(f_2 \circ f_1)^* [\mu_Y] = f_1^* f_2^* [\mu_Y]$. 
   
   \item \label{item:inducedCMequFunc}If $f \sim f^\prime$, then $f_* [\mu_X] = f^\prime_* [\mu_X]$ and $f^* [\mu_Y] = (f^\prime)^* [\mu_Y]$.

  \end{thminlist}
\end{prop}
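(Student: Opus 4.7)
The plan is to verify all four claims by direct computation, using two recurring tools: (a) the coarse Lipschitz estimate (condition \labelcref{item:cM1}) which lets one replace an entry of $\mu$ by a close one at the cost of a bounded additive error, and (b) the standard fact that any two quasi-inverses of a quasi-isometry are close, so that if $g \coarseEq g'$ then $f \circ \mu_X \circ g^{\times 3} \coarseEq f \circ \mu_X \circ (g')^{\times 3}$. The latter already shows that $f_*[\mu_X]$ is independent of the chosen quasi-inverse (and analogously for the pullback), hence well defined.

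For \labelcref{item:inducedCMrank} (rank), I would check that $\tilde\mu := f \circ \mu_X \circ g^{\times 3}$ satisfies condition \labelcref{item:cM2} with finite median algebras of rank at most $\rk[\mu_X]$. Given a finite $A \subseteq Y$, set $A' := g(A) \subseteq X$ and apply condition \labelcref{item:cM2} for $\mu_X$ to $A'$, obtaining $(M,\mu_M)$, $\lambda \colon M \to X$ and $\pi' \colon A' \to M$. Then $\tilde\lambda := f \circ \lambda$ and $\tilde\pi := \pi' \circ g|_A$ work: $(\tilde\lambda \circ \tilde\pi)(a) = f(\lambda(\pi'(g(a))))$ is close to $f(g(a))$, which in turn is close to $a$ since $f\circ g \coarseEq \id_Y$; and $\tilde\lambda \circ \mu_M \coarseEq \tilde\mu \circ \tilde\lambda^{\times 3}$ follows from the quasi-morphism property of $\lambda$ combined with $g \circ f \coarseEq \id_X$ and the coarse Lipschitz bound on $\mu_X$. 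The rank bound on $\cube(M)$ is inherited automatically, giving $\rk f_*[\mu_X] \leq \rk [\mu_X]$; applying the same argument to $g$ (using $f$ as its quasi-inverse) gives the reverse inequality, and the pullback case is identical.

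For \labelcref{item:inducedCMSymmetric}, observe that if $g$ is a quasi-inverse to $f$, then $f$ is a quasi-inverse to $g$, and both pushforward and pullback admit the common representative $f \circ \mu_X \circ g^{\times 3}$ by definition. For \labelcref{item:inducedCMfunctor}, pick quasi-inverses $g_i$ of $f_i$; then $g_1 \circ g_2$ is a quasi-inverse of $f_2 \circ f_1$, and expanding both sides yields literally the same expression $f_2 \circ f_1 \circ \mu_X \circ (g_1 \circ g_2)^{\times 3}$. The pullback identity is proved the same way. For \labelcref{item:inducedCMequFunc}, note that if $f \coarseEq f'$ then any quasi-inverse $g$ of $f$ is also a quasi-inverse of $f'$ (since $g \circ f' \coarseEq g \circ f \coarseEq \id$ and likewise on the other side, $g$ being a quasi-isometry hence coarsely Lipschitz). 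Closeness of $f$ and $f'$ then gives $f \circ \mu_X \circ g^{\times 3} \coarseEq f' \circ \mu_X \circ g^{\times 3}$ directly; the pullback statement is analogous.

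The main technical obstacle is the rank step \labelcref{item:inducedCMrank}, because one must track how the additive constants $h(|A|)$ for $\mu_X$ transform when composing with $f$ and $g$; however, since $f$ and $g$ are coarsely Lipschitz with fixed parameters, all the new errors are of the form $C \cdot h(|A|) + C'$ for constants $C,C'$ depending only on the quasi-isometry constants, which yields a valid (non-decreasing) choice of parameters for $\tilde\mu$. The remaining items are formal consequences of closeness manipulations and introduce no genuine new difficulty.
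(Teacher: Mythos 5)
Your proposal is correct and takes essentially the same route the paper intends: the paper offers no written argument beyond the remark that the properties follow \enquote{by straightforward estimates}, and your computations (reusing the same finite median algebras $M$ with $\tilde\lambda = f\circ\lambda$ for the rank statement, and exhibiting the common representative $f\circ\mu_X\circ(g\times g\times g)$ for the remaining items) are exactly those estimates. Nothing further is needed.
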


 Given a metric space $(X,d)$, we denote the set of quasi-isometries $X \to X$ modulo the equivalence relation $\sim$ by $\qisom(X,d)$.
 Composition of maps induces a well-defined binary operation on $\qisom(X,d)$ which satisfies the group axioms.
 The resulting group is known as the \emph{quasi-isometry group} of $(X,d)$.

 By the proposition above, it follows that for a coarse median space $(X,d)$, the group $\qisom(X,d)$ acts on the set of coarse median structures on $X$.
 Indeed, if $[f] \in \qisom(X,d)$ is represented by a quasi-isometry $f \colon X \to X$, and $[\mu]$ a coarse median structure on $X$ , we define the action of $[f]$ on $[\mu]$ by $[f] \cdot [\mu] := f_* [\mu]$.

 In view of the natural homomorphism $\isom(X,d) \to \qisom(X,d)$, the isometry group of a metric space also acts on the set of coarse median structures.
 The next example demonstrates that even this action is not necessarily trivial.

\begin{ex} \label{ex:coarseMedianOnEucledianSpace}
Consider the intrinsic median operation $\mu_{\R^n}$ on $\R^n$ associated to the metric induced by the $1$-norm, $\|(x^i)\|_1 = \sum_{i=1}^n |x^i|$.
Let $\euclSpace^n$ denote the metric space determined by $\R^n$ endowed with the Euclidean norm $\|(x^i)\|_2 = \sqrt{\sum_{i=1}^n |x^i|^2}$.
Then $\mu_{\R^n}$ is a coarse median on $\euclSpace^n$ as well (since all norms on $\R^n$ are equivalent).
 
 Let $A \in \isom(\euclSpace^2)$ be the rotation by $\frac{\pi}{4}$ around the origin $(0,0)$.
 For $k \in \N$, consider $x_k = (k,0), y_k = (0,k) \in \euclSpace^2$. 
 Then, by elementary Euclidean geometry, we obtain $A x_k  = ( \frac{k}{\sqrt{2}}, \frac{k}{\sqrt{2}})$ and $A y_k = (- \frac{k}{\sqrt{2}}, \frac{k}{\sqrt{2}})$. 
 Thus
 \begin{align*}
  \mu_{\R^2}(x_k, y_k, 0) &= (0, 0), \\
  \mu_{\R^2}(A x_k, A y_k, 0) &= (0, \tfrac{k}{\sqrt{2}}),
 \end{align*}
  and subsequently $\| \mu_{\R^2}(A x_k, A y_k, A0) - A (\mu_{\R^2}(x_k, y_k, 0)) \|_2 \to \infty$ for $k \to \infty$, which means $A^* [\mu_{\R^2}] \neq [\mu_{\R^2}]$.
  In fact, if  $A_\varphi$ denotes the rotation by $\varphi$ around the origin, then a similar computation as above shows that $A_\varphi^* [\mu_{\R^2}] \neq [\mu_{\R^2}]$ provided that $\varphi$ is not an integer multiple of $\frac{\pi}{2}$.
  In particular, there are uncountably many different but isomorphic coarse median structures on $\euclSpace^n$, $n \geq 2$.
\end{ex}

This example shows that, in general, a coarse median structure is an additional structure imposed on a metric space (and does not necessarily arise from the metric structure itself as in the case of hyperbolic spaces, cf.~\cref{subsec:hyperbolicSpaces}).

\begin{defi}
 Let a group $G$ act by isometries on a metric space $X$.
 A coarse median structure $[\mu]$ on $X$ is called \emph{uniformly $G$-invariant} if 
 \begin{equation*}
  \sup_{x_i \in X, g \in G} d(\mu(g \cdot x_1, g \cdot x_2, g \cdot x_3), g \cdot \mu(x_1,x_2,x_3)) < \infty.
 \end{equation*}
\end{defi}

\begin{defi} \label{defi:uniformLeftInvariant}
 Let $G$ be a group and $S \subseteq G$ a finite generating set.
 Denote by $d_S$ the left-invariant word metric on $G$ associated to $S$.
 A coarse median structure $[\mu]$ on $(G,d_S)$ is called \emph{uniformly left-invariant} if it is uniformly $G$-invariant with respect to the left multiplication action of $G$ on itself.
\end{defi}

\subsection{\texorpdfstring{$\delta$-hyperbolic spaces}{delta-hyperbolic spaces}} \label{subsec:hyperbolicSpaces}
\begin{sloppypar}Every $\delta$-hyperbolic geodesic space admits a natural rank $1$ coarse median structure~\cite[Section 3]{bowditch:coarseMedian}.
For us, a geodesic space is \emph{$\delta$-hyperbolic} (for some fixed $\delta \geq 0$) if all its geodesic triangles are $\delta$-slim~\cite[Chapter III.H]{bridson_haefliger:nonPositiveCurvature}.
In the following, we review Bowditch's construction of a coarse median on such a space.
\end{sloppypar}
Let $(X,d)$ a geodesic metric space and $K \geq 0$.
A \emph{$K$-center} of a geodesic triangle in $X$ is a point which lies within distance $K$ of each side of the triangle.
Requiring the existence of $K$-centers in geodesic metric spaces is an equivalent formulation of $\delta$-hyperbolicity for geodesic metric spaces~\cite{bowditch:hyperbolicity,bridson_haefliger:nonPositiveCurvature}.
However, we only need the facts which are collected in the lemma below.
\begin{lem} \label{lem:centersInHyperbolicSpaces}
  Let $(X,d)$ be a $\delta$-hyperbolic geodesic metric space.
  Then the following holds.
  \begin{thminlist}
  
   \item Every geodesic triangle in $X$ admits a $\delta$-center. \label{item:KcenterExists}
   
   \item For every constant $K \in \Rgeq$, there exists a constant $K^\prime = K^\prime(K, \delta)$ such that: If $m_1$ and $m_2$ are $K$-centers of geodesic triangles $\bigtriangleup_1$ and $\bigtriangleup_2$, respectively, where $\bigtriangleup_1$ and $\bigtriangleup_2$ have the same vertices, then $d(m_1, m_2) \leq K^\prime$. \label{item:KcenterUnique}
  \end{thminlist}
\end{lem}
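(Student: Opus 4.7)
My plan is to prove both parts using standard facts about $\delta$-hyperbolic geometry. For part~(i), I would use an intermediate-value argument on the side $[a,b]$. Parameterize it by a unit-speed geodesic $\gamma\colon[0,d(a,b)]\to X$ and consider the continuous function $f(t)=d(\gamma(t),[a,c])-d(\gamma(t),[b,c])$. At $t=0$ we have $\gamma(0)=a\in[a,c]$, so $f(0)\leq 0$, and at $t=d(a,b)$ we have $\gamma(t)=b\in[b,c]$, so $f(d(a,b))\geq 0$. By the intermediate value theorem, $f$ vanishes at some $t_0$, and the point $p=\gamma(t_0)\in[a,b]$ satisfies $d(p,[a,c])=d(p,[b,c])=:r$. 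By $\delta$-slimness of $\bigtriangleup(abc)$, the point $p$ lies within $\delta$ of $[a,c]\cup[b,c]$, so $\min(d(p,[a,c]),d(p,[b,c]))\leq\delta$; since the two distances are equal, $r\leq\delta$, and as $p$ already lies on $[a,b]$, it is a $\delta$-center.

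For part~(ii), I would first reduce to the case of two centers of the same triangle. This uses the standard fact that in a $\delta$-hyperbolic geodesic space any two geodesic segments with common endpoints have Hausdorff distance bounded by a constant $H=H(\delta)$. Applied side by side, each side of $\bigtriangleup_1$ is within Hausdorff distance $H$ of the corresponding side of $\bigtriangleup_2$, so any $K$-center of $\bigtriangleup_1$ is automatically a $(K+H)$-center of $\bigtriangleup_2$. Thus it suffices to bound the diameter of the set of $K''$-centers of a single geodesic triangle, where $K''=K+H$.

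The main technical step is this diameter bound for a fixed triangle $\bigtriangleup(abc)$, which I expect to be the only nonroutine point. I would use the internal-points / tripod approximation of a $\delta$-thin geodesic triangle: for each vertex such as $a$, the Gromov product $(b|c)_a$ determines an internal point on $[a,b]$ at distance $(b|c)_a$ from $a$, and the three pairs of internal points form a tripod model approximating the triangle with $O(\delta)$ error. Applying the triangle inequality to the projections of a $K''$-center $m$ onto the three sides shows that these projections lie within $O(K''+\delta)$ of the internal points; therefore $m$ itself lies within $O(K''+\delta)$ of any fixed internal point. Consequently, the distance between any two $K''$-centers is bounded by a constant $K'=K'(K'',\delta)$, which combined with the reduction in the previous paragraph yields the desired $K'(K,\delta)$.
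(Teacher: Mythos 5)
Your argument is correct, and it fills in exactly what the paper leaves to the literature: the paper gives no proof of this lemma, citing Bowditch's notes on hyperbolicity and Bridson--Haefliger instead. Your intermediate-value argument on the side $[a,b]$ for part (i), and for part (ii) the reduction to a single triangle via the Hausdorff-closeness of geodesics with common endpoints followed by the internal-point/tripod comparison, are precisely the standard proofs found in those references.
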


 \begin{sloppypar}Let $(X,d)$ be a $\delta$-hyperbolic geodesic metric space and fix some $K \geq \delta$.
   For each $x_1,x_2,x_3 \in X$, choose an arbitrary $K$-center of some geodesic triangle with vertices $x_1,x_2,x_3$ and denote it by $\mu_{\hypCM}(x_1,x_2,x_3)$.
  By \cref{lem:centersInHyperbolicSpaces}, the closeness class of $\mu_{\hypCM} \colon X^3 \to X$ does not depend on any of the choices (not even on $K$ as long as it remains fixed throughout the construction).
   Moreover, using the fact that in hyperbolic spaces finite subsets can be approximated by trees~~\cite[\S 6.2]{gromov:hyperbolicGroups}, one proves that $\mu_{\hypCM}$ is a coarse median of rank at most $1$~\cite[Section 3]{bowditch:coarseMedian}.
\end{sloppypar}   
   \begin{defi}
     We call $[\mu_\hypCM]$ the \emph{hyperbolic coarse median structure} of $X$.
   \end{defi}

 Using stability of quasi-geodesics~\cite[Part III, Theorem 1.7]{bridson_haefliger:nonPositiveCurvature}, we obtain the following lemma which implies that the hyperbolic coarse median structure is \enquote{stable under quasi-isometries}.
 This is in strong contrast to what we have seen in \cref{ex:coarseMedianOnEucledianSpace}.

\begin{lem} \label{lem:hypCMNatural}
 Let $f \colon X \to Y$ be a quasi-isometry between $\delta$-hyperbolic geodesic metric spaces. Let $[\mu_{\hypCM,X}], [\mu_{\hypCM,Y}]$ denote the hyperbolic coarse median structures on $X$ and $Y$, respectively. Then $f_* [\mu_{\hypCM,X}] = [\mu_{\hypCM,Y}]$.
\end{lem}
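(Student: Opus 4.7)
My plan is to verify $f_*[\mu_{\hypCM,X}] = [\mu_{\hypCM,Y}]$ directly from \cref{defi:inducedMedian} by invoking stability of quasi-geodesics together with \cref{lem:centersInHyperbolicSpaces}. Choosing any quasi-inverse $g \colon Y \to X$ of $f$, the desired equality unfolds to the assertion that the ternary operation $(y_1,y_2,y_3) \mapsto f(\mu_{\hypCM,X}(g(y_1), g(y_2), g(y_3)))$ is close to $\mu_{\hypCM,Y}$ on $Y^3$. I would fix $(\lambda,\varepsilon)$-quasi-isometry constants for $f$ and a constant $C \geq 0$ with $d(f(g(y)), y) \leq C$ for all $y \in Y$ once and for all.

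Given $y_1, y_2, y_3 \in Y$, set $x_i := g(y_i) \in X$. By construction $m_X := \mu_{\hypCM,X}(x_1,x_2,x_3)$ is a $\delta$-center of some geodesic triangle $T_X \subseteq X$ with vertices $x_1, x_2, x_3$. Applying $f$ sends each side of $T_X$ to a $(\lambda,\varepsilon)$-quasi-geodesic in $Y$ connecting $f(x_i)$ to $f(x_j)$. Stability of quasi-geodesics in hyperbolic spaces~\cite[Part III, Theorem 1.7]{bridson_haefliger:nonPositiveCurvature} provides a constant $H = H(\delta, \lambda, \varepsilon)$ such that every such quasi-geodesic side lies within Hausdorff distance $H$ of any geodesic in $Y$ with the same endpoints. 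Since $f(m_X)$ lies within distance $\lambda\delta + \varepsilon$ of the $f$-image of each side of $T_X$, it follows that $f(m_X)$ is a $K_1$-center of a geodesic triangle in $Y$ with vertices $f(x_1), f(x_2), f(x_3)$, where $K_1 := \lambda\delta + \varepsilon + H$.

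To pass to a triangle with the desired vertices $y_i$, I would exploit the bound $d(f(x_i), y_i) \leq C$: a standard estimate via $\delta$-slimness of geodesic quadrilaterals shows that any geodesic $[y_i, y_j]$ is at Hausdorff distance bounded in terms of $C$ and $\delta$ from any geodesic $[f(x_i), f(x_j)]$. This upgrades $f(m_X)$ to a $K_2$-center of a geodesic triangle in $Y$ with vertices $y_1, y_2, y_3$ for a constant $K_2$ depending only on $K_1$, $C$, and $\delta$. Finally, \cref{lem:centersInHyperbolicSpaces}~\labelcref{item:KcenterUnique} applied to $f(m_X)$ and $\mu_{\hypCM,Y}(y_1,y_2,y_3)$ yields a uniform bound on $d(f(m_X), \mu_{\hypCM,Y}(y_1, y_2, y_3))$ depending only on $K_2$ and $\delta$, giving exactly the required closeness.

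The main work is assembling the quantitative estimates; each individual step (stability of quasi-geodesics, slim quadrilaterals with nearby endpoints, uniqueness of centers) is standard in the theory of $\delta$-hyperbolic spaces, so the principal obstacle is essentially bookkeeping of the constants.
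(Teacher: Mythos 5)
Your proposal is correct and follows exactly the route the paper intends: the paper states this lemma as a direct consequence of stability of quasi-geodesics together with \cref{lem:centersInHyperbolicSpaces}~\labelcref{item:KcenterUnique}, and gives no further detail, so your write-up simply supplies the bookkeeping the paper omits. The only cosmetic point is that $\mu_{\hypCM,X}(x_1,x_2,x_3)$ is by construction a $K$-center for some fixed $K \geq \delta$ rather than necessarily a $\delta$-center, which changes nothing in the estimates.
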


For any hyperbolic group $G$ and any finite generating set $S \subseteq G$, there is a well-defined hyperbolic coarse median structure $\boldsymbol{\mu}_G$  on $(G, d_S)$.
Indeed, $\boldsymbol{\mu}_G$ is obtained as the pullback of the hyperbolic coarse median structure on the Cayley graph of $G$ via the inclusion map of the vertex set.
From \cref{lem:centersInHyperbolicSpaces}, we obtain the following.
\begin{cor}
 The coarse median structure $\boldsymbol{\mu}_G$ on any hyperbolic group $G$ is uniformly left-invariant.
\end{cor}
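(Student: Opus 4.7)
The plan is to exploit the fact that left multiplication by elements of $G$ acts by isometries on the Cayley graph $\cayg(G, S)$, which is $\delta$-hyperbolic since $G$ is hyperbolic. Since $\mu_{\hypCM}$ is constructed purely from $\delta$-centers of geodesic triangles, it is almost equivariant with respect to any isometric action on the underlying space, and the passage to $(G, d_S)$ by pullback will preserve this property provided we make an equivariant choice of quasi-inverse.

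First, I would denote $X = \cayg(G, S)$ and observe that left multiplication gives an isometric action $L \colon G \to \isom(X)$. Picking a strict fundamental domain for the $G$-action on $X$ and for each point in it choosing some nearest vertex in $G \subseteq X$, I extend this $G$-equivariantly to obtain a quasi-inverse $q \colon X \to G$ to the inclusion $\iota \colon G \hookrightarrow X$ such that $q(g \cdot x) = g \cdot q(x)$ for all $g \in G$, $x \in X$. Then a representative of $\boldsymbol{\mu}_G$ is given by the composition $\mu_G := q \circ \mu_{\hypCM,X} \circ \iota^{\times 3}$.

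Next, I would fix $g \in G$ and $x_1, x_2, x_3 \in G$ and compare $\mu_{\hypCM,X}(gx_1, gx_2, gx_3)$ with $L_g\bigl(\mu_{\hypCM,X}(x_1, x_2, x_3)\bigr)$. By construction the former is a $\delta$-center of some geodesic triangle with vertices $gx_1, gx_2, gx_3$; the latter is the image under the isometry $L_g$ of a $\delta$-center of some geodesic triangle with vertices $x_1, x_2, x_3$, which is therefore again a $\delta$-center of a (possibly different) geodesic triangle with the same vertices $gx_1, gx_2, gx_3$. Hence by \cref{lem:centersInHyperbolicSpaces} \labelcref{item:KcenterUnique}, their distance in $X$ is bounded by some constant $K' = K'(\delta, \delta)$ independent of $g$ and the $x_i$.

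Finally, applying the coarsely Lipschitz map $q$, together with its equivariance, I obtain
\begin{equation*}
d_S\bigl(\mu_G(gx_1, gx_2, gx_3),\, g \cdot \mu_G(x_1, x_2, x_3)\bigr) = d_S\bigl(q(\mu_{\hypCM,X}(gx_1,gx_2,gx_3)),\, q(L_g \mu_{\hypCM,X}(x_1,x_2,x_3))\bigr),
\end{equation*}
which is bounded by a constant depending only on $K'$ and the quasi-isometry constants of $q$, hence only on $\delta$ and $S$. This establishes uniform left-invariance of the representative $\mu_G$, and therefore of the coarse median structure $\boldsymbol{\mu}_G$. There is no serious obstacle here; the only mild point of care is securing an equivariant quasi-inverse, which is handled by the fundamental domain construction.
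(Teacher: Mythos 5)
Your proof is correct and follows essentially the same route the paper intends: the corollary is stated as an immediate consequence of \cref{lem:centersInHyperbolicSpaces}~\labelcref{item:KcenterUnique}, since left multiplication acts by isometries on the Cayley graph and isometries carry $\delta$-centers to $\delta$-centers of triangles with the same vertices. Your extra care in building an equivariant quasi-inverse $q$ is harmless but unnecessary, as any quasi-inverse of the vertex inclusion is automatically equivariant up to a uniform error, which suffices here.
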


We conclude this section by illustrating our formal tools and use them to compare coarse median structures on hyperbolic groups acting properly and cocompactly on a $\cat(0)$ cubical complex.
For instance, such examples arise from various types of small-cancellation groups~\cite{wise:cubulatingSmallCancel}.

\begin{ex} \label{ex:HyperbolicGpactingOnCCC}

Fix a finite-dimensional and locally finite $\cat(0)$ cubical complex $C$. 
The vertex set $C^{(0)}$, endowed with the path metric from the $1$-skeleton of $C$, is a median metric space. The inclusion map $\iota \colon C^{(0)} \hookrightarrow C$ is a quasi-isometry. 
Thus, there is a coarse median structure on $C$ defined by $\boldsymbol{\mu}_C := \iota_* [\mu_{C^{(0)}}]$, where $\mu_{C^{(0)}}$ is the median on the vertex set.
Furthermore, let $G$ be a hyperbolic group which acts properly and cocompactly by isometries on $C$. 
By the \v{S}varc--Milnor lemma~\cite[Proposition I.8.19]{bridson_haefliger:nonPositiveCurvature}, the map $\ev_{x} \colon G \to C, g \mapsto g \cdot x$ is a quasi-isometry for all $x \in C$.

 We claim that for any choice of base-point $x \in C$, we have,
 \begin{equation}
 (\ev_{x})_* \boldsymbol{\mu}_G = \boldsymbol{\mu}_C. \label{eq:hyperbolicCoarseMedianIsCubicalCoarseMedian}
\end{equation}  
 In other words, the hyperbolic coarse median structure on $G$ automatically agrees with the one induced by the median on the $\cat(0)$ cubical complex.
 
 Indeed, first observe that $C$ is a hyperbolic space due to quasi-isometry invariance of hyperbolicity~\cite[Chapter III.H, Theorem 1.9]{bridson_haefliger:nonPositiveCurvature}.
 We consider the $1$-skeleton $C^{(1)}$ to be endowed with the path metric, which we denote by $d_{C^{(1)}}$. Then the inclusion $j \colon C^{(1)} \hookrightarrow C$ is a quasi-isometry.	
 Thus $(C^{(1)}, d_{C^{(1)}})$ is a hyperbolic geodesic metric space as well.
 Let $\boldsymbol{\mu}_{C^{(1)}, \hypCM}$ denote the hyperbolic coarse median structure on $C^{(1)}$. By \cref{lem:hypCMNatural}, we have
 \begin{equation}
    j_* \boldsymbol{\mu}_{C^{(1)}, \hypCM} = (\ev_{x})_* \boldsymbol{\mu}_G. \label{eq:CMcomparison1}
 \end{equation}
  Furthermore, we have 
  \begin{equation}
  i_* [\mu_{C^{(0)}}] = \boldsymbol{\mu}_{C^{(1)}, \hypCM}, \label{eq:CMcomparison2}
  \end{equation}
 where $i \colon C^{(0)} \hookrightarrow C^{(1)}$ is the inclusion map.
 To see this, just observe that for every $x_1,x_2,x_3 \in C^{(0)}$, there exists a geodesic tripod in $C^{(1)}$ with vertices $x_1,x_2,x_3$ and center $\mu_{C^{(0)}}(x_1,x_2,x_3)$.
 As a consequence, $\mu_{C^{(0)}}(x_1,x_2,x_3)$ is a $0$-center of this particular geodesic triangle and hence valid choice for an hyperbolic coarse median point (cf.\ the discussion in \cref{subsec:hyperbolicSpaces}) of $(x_1,x_2,x_3)$ in $C^{(1)}$.
 This proves $\labelcref{eq:CMcomparison2}$.
  Using \labelcref{eq:CMcomparison1}, \labelcref{eq:CMcomparison2} and \cref{prop:formalPropOfInducedCM}, we conclude \labelcref{eq:hyperbolicCoarseMedianIsCubicalCoarseMedian} as follows,
 \begin{equation*}
  \boldsymbol{\mu}_C = \iota_* [\mu_{C^{(0)}}] = j_* i_* [\mu_{C^{(0)}}] = j_* \boldsymbol{\mu}_{C^{(1)}, \hypCM} = (\ev_{x})_* \boldsymbol{\mu}_G.
 \end{equation*}
\end{ex}

\section{Homomorphisms from Kazhdan groups into coarse median groups} \label{sec:homosFromKazhdan}
In this section, we prove our main result on homomorphisms from Kazhdan groups into groups with an uniformly left-invariant coarse median structure.
In general, our proof works the same way as in Paulin's article~\cite{paulin:outerAutomorphismsOfHyperbolicGroups} on the outer automorphism group of hyperbolic groups with Kazhdan's property (T).
However, in a slight variation compared to Paulin, we use the language of asymptotic cones instead of equivariant Gromov--Hausdorff topologies.
\subsection{Asymptotic cones} \label{subsec:asympCone}
We start by reviewing basic facts about ultralimits and asymptotic cones~\cite[Chapter I.5]{bridson_haefliger:nonPositiveCurvature}.
Let $\omega$ be a non-principal ultrafilter on $\N$ and $(X_n, d_n, \basePt_n)_{n \in \N}$ a sequence of pointed metric spaces.
Its \emph{ultralimit} with respect to $\omega$ is defined by
\begin{equation*}
 \mathrm{X}_\omega = \left\{ (x_n)_{n \in \N} \in \prod_{n \in \N} X_n \middlemid \sup_{n \in \N} d_n(x_n, \basePt_n) < \infty \right\} \bigg/ \mathord\sim,
\end{equation*}
where $(x_n)_n \sim (y_n)_n$ if $d_\omega((x_n)_n, (y_n)_n) := \lim_\omega d_n(x_n,y_n) = 0$. 
Then $d_\omega$ descends to a complete metric on $\mathrm{X}_\omega$. 

\begin{sloppypar}For every $n \in \N$, let $\alpha_n \colon X_n \to X_n$ be an isometry and assume that $\sup_{n \in \N} d_n( \basePt_n, \alpha_n(\basePt_n) ) < \infty$.
Then there is an isometry $\alpha_\omega \colon \mathrm{X}_\omega \to \mathrm{X}_\omega$ defined on representatives by $\alpha_\omega((x_n)_n) = (\alpha_n(x_n))_n$.
\end{sloppypar}
If we apply this construction to a sequence of the form $(X, \frac{1}{\lambda_n} d, \basePt_n)$, where $(X, d)$ is some fixed metric space with a sequence $(\basePt_n)_n \subset X$, and $(\lambda_n)_n \subset \Rgeq$ some sequence with $\lambda_n \to \infty$ as $n \to \infty$, then we call the resulting space $\mathrm{X}_\omega$ an \emph{asymptotic cone} of $(X,d)$, denoted by $\assympCone_\omega(X,d, (\lambda_n)_n, (\basePt_n)_n)$.

\begin{prop} \label{prop:asymptoticConeIsMetricMedianAlgebra}
 Let $(X,d)$ be a geodesic metric space which admits a coarse median structure of rank at most $n$.
 Then every asymptotic cone of $X$ is a uniformly rectifiable metric median algebra of rank at most $n$.
\end{prop}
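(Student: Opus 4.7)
The plan is to invoke Bowditch's result~\cite[Section 8]{bowditch:coarseMedian} that any asymptotic cone of a space admitting a rank-$n$ coarse median is a metric median algebra of rank at most $n$, and then to establish uniform rectifiability via \cref{lem:conditionsForBeingRecitfiable}.

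To make the cited result concrete, fix a representative $\mu$ of the coarse median with parameters $k, h$ such that the finite median algebras appearing in \cref{defi:coarseMedian} \labelcref{item:cM2} have rank $\leq n$, and write $X_\omega = \assympCone_\omega(X, d, (\lambda_n)_n, (\basePt_n)_n)$. The median on the cone is defined componentwise by $\mu_\omega([(x_n)_n], [(y_n)_n], [(z_n)_n]) = [(\mu(x_n, y_n, z_n))_n]$. The coarse Lipschitz condition \cref{defi:coarseMedian} \labelcref{item:cM1} yields, after rescaling and taking $\omega$-limits, that $\mu_\omega$ is well-defined on equivalence classes, takes values in $X_\omega$, and is $k$-Lipschitz with respect to $d_\omega$ (in particular continuous). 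The median algebra axioms transfer from each approximating finite median algebra $M_n$ provided by \cref{defi:coarseMedian} \labelcref{item:cM2} applied to the representatives of the (at most five) points involved in a given axiom: since each axiom holds exactly in $M_n$, the quasi-morphism property of $\Lambda_n$ combined with the coarse Lipschitz property of $\mu$ transfers it back to $X$ with an additive error bounded by a constant depending only on $k$ and $h$, which rescales to zero along the cone construction.

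The rank bound is the more delicate point. A hypothetical $(n+1)$-cube $Q = \{q_\sigma\}_{\sigma \in \{\pm 1\}^{n+1}}$ in $X_\omega$ lifts, via representatives $q_\sigma = [(q_{\sigma, n})_n]$, to a configuration with pairwise separation $d(q_{\sigma, n}, q_{\tau, n}) \geq \delta \lambda_n$ for some $\delta > 0$ and $\omega$-almost all $n$; applying \cref{defi:coarseMedian} \labelcref{item:cM2} to the sets $A_n = \{q_{\sigma, n}\}_\sigma$ produces approximate cube configurations in the rank-$\leq n$ finite median algebras $M_n$. One then passes to the $\omega$-limit of the subalgebras $N_n := \langle \pi_n(A_n) \rangle \subseteq M_n$, whose cardinality is uniformly bounded by \cref{lem:finiteSubalgebra}, to obtain a finite median algebra of rank $\leq n$. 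The separation $\delta \lambda_n \to \infty$ is precisely what is needed to pin down, in the limit, a genuine embedded $(n+1)$-cube, yielding the desired contradiction. Carrying out this interplay between the $\Theta(\lambda_n)$ separation and the $O(k,h)$ bounded-error approximations is the main obstacle, and is essentially the content of Bowditch's cited theorem.

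Uniform rectifiability follows from \cref{lem:conditionsForBeingRecitfiable} once the metric median algebra structure of rank $\leq n$ is in hand. Its hypothesis (i) is the $k$-Lipschitz property of $\mu_\omega$ established above. For hypothesis (ii), since $X$ is geodesic by assumption, given $[(x_n)_n], [(y_n)_n] \in X_\omega$ I pick unit-speed geodesics $c_n \colon [0, d(x_n, y_n)] \to X$ and, after the standard reparameterisation and passage to the $\omega$-limit, obtain a $1$-Lipschitz path joining the two given points. Hence \cref{lem:conditionsForBeingRecitfiable} applies with $l = 1$, and uniform rectifiability holds with a constant depending only on $n$ and $k$. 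This uniform rectifiability is the new content of the present proposition beyond Bowditch's work, and is exactly what will be needed to invoke \cref{thm:TfixedPoint} in the sequel.
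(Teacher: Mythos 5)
Your proposal is correct and follows essentially the same route as the paper: define $\mu_\omega$ componentwise, cite Bowditch (the paper uses \cite[Proposition 9.4, Lemma 9.2]{bowditch:coarseMedian}) for the metric median algebra structure of rank at most $n$ and the Lipschitz property, and then deduce uniform rectifiability from \cref{lem:conditionsForBeingRecitfiable} using that the asymptotic cone of a geodesic space is geodesic. The extra sketches you give of the median axioms and the rank bound are just an unpacking of the cited Bowditch results, not a different argument.
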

\begin{proof}
 Fix $\lambda_n \to \infty$ and $(\basePt_n)_n \subset X$ and set $\mathrm{X}_\omega = \assympCone_\omega(X, d, (\lambda_n)_n, (\basePt_n)_n)$.
  Let $\mu$ be a coarse median representing the given coarse median structure on $X$.

 Now consider elements $\mathrm{x}, \mathrm{y}, \mathrm{z} \in \mathrm{X}_\omega$, represented by sequences $(x_n)_n, (y_n)_n,(z_n)_n \subset X$, respectively.
 Then $(\mu(x_n, y_n, z_n))_n$ represents an element of $\mathrm{X}_\omega$, which we denote by ${\mu_\omega}(\mathrm{x}, \mathrm{y}, \mathrm{z})$.
 Then $(\mathrm{X}_\omega, d_\omega, \mu_\omega)$ is a well-defined metric median algebra of rank at most $n$~\cite[Proposition 9.4]{bowditch:coarseMedian}.
 Moreover, $\mu_\omega$ is Lipschitz~\cite[Lemma 9.2]{bowditch:coarseMedian}, that is, it satisfies condition \labelcref{item:Lipschitz} from \cref{lem:conditionsForBeingRecitfiable}.
 As an asymptotic cone of a geodesic space, $(\mathrm{X}_\omega, d_\omega)$ is geodesic itself, so it trivially satisfies \labelcref{item:almostGeodesic} from \cref{lem:conditionsForBeingRecitfiable}.
 Thus, \cref{lem:conditionsForBeingRecitfiable} implies that $(\mathrm{X}_\omega, d_\omega, \mu_\omega)$ is uniformly rectifiable.
\end{proof}

The following technical lemma follows immediately from the definitions, we record it here for later reference.
\begin{lem} \label{lem:InducingIsometricAutomorphisms}
 We keep working with the setup of \cref{prop:asymptoticConeIsMetricMedianAlgebra}.
 Consider an asymptotic cone $\mathrm{X}_\omega = \assympCone_\omega(X, d, (\lambda_n)_n, (\basePt_n)_n)$ and $\alpha_n \colon X \to X$ a sequence of isometries such that,
  \begin{gather}
  \sup_{n \in \N} \frac{1}{\lambda_n} d( \basePt_n, \alpha_n(\basePt_n) ) < \infty, \\
  \sup_{x_i \in X, n \in \N} d \left( \mu(\alpha_n(x_1), \alpha_n(x_2), \alpha_n(x_3)), \alpha_n(\mu(x_1,x_2,x_3)) \right) < \infty, \label{eq:condForInducingIsometricAutomorphisms}
 \end{gather}
 where $\mu$ is some representative of the coarse median structure.
 
 Then the map $\alpha_\omega \colon \mathrm{X}_\omega \to \mathrm{X}_\omega$ is an isometric automorphism of the metric median algebra $(\mathrm{X}_\omega, d_\omega, \mu_\omega)$.
\end{lem}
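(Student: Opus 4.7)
The plan is to verify the two defining conditions of an isometric automorphism directly from the definitions, using the two hypotheses on the sequence $(\alpha_n)_n$ in turn. First I would invoke the general construction of induced isometries on ultralimits recalled in \cref{subsec:asympCone}: the hypothesis $\sup_n \frac{1}{\lambda_n} d(\basePt_n, \alpha_n(\basePt_n)) < \infty$ is precisely the condition that makes the sequence $(\alpha_n)_n$ preserve the admissibility condition in the definition of $\mathrm{X}_\omega$ (namely, bounded distance from the basepoint after rescaling). Since each $\alpha_n$ is an isometry of $(X, \frac{1}{\lambda_n} d)$ as well, the formula $\alpha_\omega((x_n)_n) := (\alpha_n(x_n))_n$ descends to a well-defined isometry of $(\mathrm{X}_\omega, d_\omega)$.

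It then remains to check that $\alpha_\omega$ commutes with $\mu_\omega$, i.e.~that for any $\mathrm{x}_1, \mathrm{x}_2, \mathrm{x}_3 \in \mathrm{X}_\omega$ represented by sequences $(x_n^1)_n, (x_n^2)_n, (x_n^3)_n \subset X$ we have
\begin{equation*}
 \mu_\omega(\alpha_\omega(\mathrm{x}_1), \alpha_\omega(\mathrm{x}_2), \alpha_\omega(\mathrm{x}_3)) = \alpha_\omega(\mu_\omega(\mathrm{x}_1, \mathrm{x}_2, \mathrm{x}_3)).
\end{equation*}
By construction of $\mu_\omega$ and $\alpha_\omega$, the left-hand side is represented by the sequence $\bigl(\mu(\alpha_n(x_n^1), \alpha_n(x_n^2), \alpha_n(x_n^3))\bigr)_n$, while the right-hand side is represented by $\bigl(\alpha_n(\mu(x_n^1, x_n^2, x_n^3))\bigr)_n$. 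Letting $C$ denote the uniform bound from hypothesis \labelcref{eq:condForInducingIsometricAutomorphisms}, the rescaled $\omega$-distance between these two representatives is bounded above by $\lim_\omega \frac{C}{\lambda_n} = 0$, so they agree in $\mathrm{X}_\omega$ as required.

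There is essentially no obstacle here; the statement is a direct unravelling of definitions, and the only mild subtlety is that the hypothesis is stated in terms of \emph{one} representative $\mu$ of the coarse median structure, whereas $\mu_\omega$ a priori depends on this choice. However, different representatives are close by \cref{defi:cmStructures}, so they induce the \emph{same} ternary operation $\mu_\omega$ on the asymptotic cone (the closeness constant being absorbed by the factor $\lambda_n^{-1} \to 0$), making the conclusion independent of the chosen representative.
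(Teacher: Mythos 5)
Your proof is correct and matches the paper's treatment: the paper simply asserts that the lemma ``follows immediately from the definitions,'' and your argument is exactly the intended unravelling --- the basepoint condition gives a well-defined induced isometry $\alpha_\omega$ via the construction recalled in \cref{subsec:asympCone}, and the uniform bound in \labelcref{eq:condForInducingIsometricAutomorphisms} becomes negligible after rescaling by $\lambda_n^{-1}$, so $\alpha_\omega$ commutes with $\mu_\omega$. Your closing remark on independence of the chosen representative $\mu$ is a correct and welcome additional observation.
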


\subsection{Abstract version of Paulin's theorem} \label{subsec:genericPaulin}
\begin{sloppypar}
Let $G$ be a finitely generated group.
Consider an asymptotic cone $\assympCone_\omega(G, d_S, (\lambda_n)_n, (\basePt_n)_n)$ and a sequence of homomorphisms $\varphi_n \colon H \to G$ such that $\left( \frac{1}{\lambda_n} d(\varphi_n(h) \basePt_n, \basePt_n) \right)_n$ is bounded for all fixed $h \in H$.
Then by the discussion in \cref{subsec:asympCone}, we have an induced action of $H$ on $\assympCone(G, d_S, (\lambda_n)_n, (\basePt_n)_n)$ by isometries.
\end{sloppypar}
\begin{defi}
 We say that a group $H$ has property $\FHomCone(G)$ if for all asymptotic cones of $G$, every action constructed as above has a fixed point.
\end{defi}

We say two homomorphisms $\varphi_1, \varphi_2 \colon H \to G$ are \emph{conjugate} if there exists $g \in G$ such that $\varphi_2(h) = g \varphi_1(h) g^{-1}$ for all $h \in H$.

\begin{prop}[{Abstract version of Paulin's theorem~\cite{paulin:outerAutomorphismsOfHyperbolicGroups}}] \label{prop:genericPaulin}
 Let $G$ and $H$ be finitely generated groups such that $H$ has property $\fixedPtProperty_{\assympCone}(G)$.
 Then there are only finitely many conjugacy classes of homomorphisms from $H$ into $G$.
\end{prop}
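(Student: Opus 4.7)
The plan is to adapt Paulin's scaling strategy~\cite{paulin:outerAutomorphismsOfHyperbolicGroups}, using property $\FHomCone(G)$ in place of the fixed-point-on-trees argument Paulin used in the hyperbolic setting. I would argue by contradiction, converting a hypothetical infinite family of conjugacy classes into an $H$-action on an asymptotic cone of $G$ whose forced fixed point contradicts how the family was chosen.

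First, I would fix finite generating sets $T \subset H$ and $S \subset G$ with associated left-invariant word metric $d_S$, and introduce the conjugation-invariant displacement
\[ \lambda(\varphi) = \inf_{g \in G}\, \max_{t \in T}\, d_S\bigl(g\varphi(t)g^{-1},\, 1_G\bigr), \]
which by left-invariance of $d_S$ also equals $\inf_{g} \max_{t} d_S(\varphi(t)g, g)$, the infimal displacement of the left-multiplication $\varphi$-action on $G$. Since the ball $B_S(1_G, N)$ is finite for every $N$, only finitely many homomorphisms $\varphi$ can satisfy $\max_t d_S(\varphi(t), 1_G) \leq N$, and each conjugacy class with $\lambda(\varphi) \leq N$ admits such a representative. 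Hence if there were infinitely many pairwise non-conjugate homomorphisms $\varphi_n \colon H \to G$, then after reindexing I may assume $\lambda_n := \lambda(\varphi_n) \to \infty$; by replacing each $\varphi_n$ with a suitable conjugate I can then choose base points $\basePt_n \in G$ satisfying
\[ \max_{t \in T}\, d_S(\varphi_n(t)\basePt_n, \basePt_n) \leq \lambda_n + 1. \]

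Next I would form the asymptotic cone $\mathrm{G}_\omega = \assympCone_\omega(G, d_S, (\lambda_n)_n, (\basePt_n)_n)$ for some non-principal ultrafilter $\omega$. For each $h \in H$, left multiplication by $\varphi_n(h)$ is an isometry of $(G, d_S)$; writing $h$ as a word of length $k$ in $T$ and applying subadditivity of word length to $\basePt_n^{-1}\varphi_n(h)\basePt_n$ yields $d_S(\varphi_n(h)\basePt_n, \basePt_n) \leq k(\lambda_n+1)$, so the sequence $(\varphi_n(h))_n$ descends to a well-defined isometry of $\mathrm{G}_\omega$. This produces precisely the kind of isometric $H$-action on $\mathrm{G}_\omega$ that appears in the definition of $\FHomCone(G)$.

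Finally, I would invoke property $\FHomCone(G)$ to obtain a fixed point $x^* = [(x_n)_n] \in \mathrm{G}_\omega$ of this action. For every $t \in T$ this yields $\lim_\omega \tfrac{1}{\lambda_n} d_S(\varphi_n(t) x_n, x_n) = 0$, and hence the same for the maximum over the finite set $T$. But by definition of $\lambda_n$ one has $\max_{t \in T} d_S(\varphi_n(t) x_n, x_n) \geq \lambda_n$ for every $n$, so $\tfrac{1}{\lambda_n}\max_t d_S(\varphi_n(t) x_n, x_n) \geq 1$ $\omega$-almost surely, the desired contradiction. The main obstacle is a bookkeeping one: choosing $\basePt_n$ so that $H$ acts on the cone at the correct scale while simultaneously keeping the near-minimality defining $\lambda_n$ sharp enough for the ultralimit fixed point to violate the infimum; everything else reduces to routine ultralimit and word-length estimates.
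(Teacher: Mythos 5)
Your argument is correct and follows essentially the same route as the paper's proof: minimize the displacement $\lambda_{\varphi_n}$ over conjugates, rescale by $\lambda_n$ to form an asymptotic cone based at the (near-)minimizers, invoke $\FHomCone(G)$ to get a fixed point, and contradict minimality of $\lambda_n$ in the ultralimit. The only difference is cosmetic: you spell out the finite-ball argument behind the fact that $\lambda_{\varphi_n} \to \infty$ for pairwise non-conjugate homomorphisms, which the paper simply records as Paulin's well-known key lemma.
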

\begin{proof}
 Fix a finite generating set $T \subseteq H$.
For each homomorphism $\varphi \colon H \to G$, consider the function $l_{\varphi} \colon G \to \Rgeq, l_\varphi(g) = \max_{t \in T} d(\varphi(t) g, g)$, where $d$ is some word metric on $G$ induced by a finite generating set.
Since $l_{\varphi}$ takes integer values, the minimum $\lambda_\varphi := \min_{g \in G} l_\varphi(g)$ always exists.
We note the following well-known fact.
\begin{lem}[{\cite{paulin:outerAutomorphismsOfHyperbolicGroups}}] \label{lem:paulinKeyLemma}
 If $\varphi_n \colon H \to G$ is a sequence of pairwise non-conjugate homomorphisms, then $\lambda_{\varphi_n} \to \infty$ as $n \to \infty$.
\end{lem}

Now suppose that \cref{prop:genericPaulin} is false, that is, there indeed exists a sequence of pairwise non-conjugate homomorphisms $\varphi_n \colon H \to G$.
Then for each $n \in \N$, choose a minimizing point $g_n \in G$ such that $l_{\varphi_n}(g_n) = \lambda_n := \lambda_{\varphi_n}$.
By \cref{lem:paulinKeyLemma}, we may construct an asymptotic cone $\mathrm{X}_\omega = \assympCone_\omega(G,d,(\lambda_n)_n, (g_n)_n)$.
By definition of $l_{\varphi_n}$, we have $d( \varphi_n(h) g_n, g_n) \leq |h|_T l_{\varphi_n}(g_n) = |h|_T \lambda_n$ for all $h \in H$.
In particular, the sequence of homomorphisms $(\varphi_n)_n$ induces an action of $H$ on $\mathrm{X}$, which, by assumption, has a fixed point $\mathrm{x} \in \mathrm{X}$.
The point $\mathrm{x}$ is represented by a sequence $(x_n)_n \subset G$.
The fact that $\mathrm{x}$ is a fixed point means that
\begin{equation*}
  \lim_\omega \frac{d\left(x_n, \varphi_n(h) x_n \right)}{\lambda_n} = 0, \qquad \forall h \in H.
 \end{equation*}
 In particular, since $T$ is finite, there exists $n_0 \in \N$ such that $d(x_{n_0}, \varphi_{n_0}(t) x_{n_0}) < \frac{\lambda_{n_0}}{2}$ for all $t \in T$, contradicting the definition of $\lambda_{n_0}$.
\end{proof}

We are now in a position to prove our main results.

\thmPaulinForCoarseMedian
\begin{proof}
 Let $G$ be a finitely generated group which admits a uniformly left-invariant coarse median structure and $H$ a finitely generated group with Kazhdan's property (T).
 It suffices to prove that $H$ has  $\fixedPtProperty_{\assympCone}(G)$, then the theorem follows from \cref{prop:genericPaulin}.

 Indeed, consider an asymptotic cone $\mathrm{X}_\omega = \assympCone_\omega(G, d_S, (\lambda_n)_n, (\basePt)_n)$ and a sequence of homomorphisms $\varphi_n \colon H \to G$ as in the beginning of this subsection.
 By \cref{prop:asymptoticConeIsMetricMedianAlgebra}, $\mathrm{X}_\omega$ admits the structure $(X_\omega, d_\omega, \mu_\omega)$ of a uniformly rectifiable metric median algebra of finite rank.
 For each fixed $h \in H$, let $\alpha_n(h) \colon G \to G$ denote the left multiplication map by $\varphi_n(h)$.
 Because the coarse median structure is uniformly left-invariant, the sequence of isometries $(\alpha_n(h))_n$ satisfies \labelcref{eq:condForInducingIsometricAutomorphisms}, whence \cref{lem:InducingIsometricAutomorphisms} says that the induced action of $H$ on $\mathrm{X}_\omega$ is by isometric automorphisms of the metric median algebra.
 Finally, \cref{thm:TfixedPoint} implies that the action has a fixed point.
\end{proof}
\corPaulinForCoarseMedian

\begin{ex} 
    Let $\Sigma$ be a compact orientable surface of genus $g$ and with $p$ punctures.
    Denote its mapping class group by $\mathrm{Map}(\Sigma)$.
    Bowditch~\cite[Sections 10--11]{bowditch:coarseMedian} has shown that the \emph{centroid} of Behrstock--Minsky~\cite{behrstock-minsky:centroidsAndRapidDecay} on $\mathrm{Map}(\Sigma)$ is a coarse median of rank at most $\xi(\Sigma)$, where $\xi(\Sigma) = 3g - 3 + p$ is the \emph{complexity} of $\Sigma$.
    Since the centroid is equivariant~\cite[Theorem 1.2]{behrstock-minsky:centroidsAndRapidDecay}, we observe that the induced coarse median structure is uniformly $\mathrm{Map}(\Sigma)$-invariant.
    Consequently, \cref{thm:paulinForCoarseMedian} is applicable to $\mathrm{Map}(\Sigma)$ and hence we recover~\cite[Theorem 1.2]{behrstock-drutu-sapir:MedianStructuresOnAymptoticCones}.
\end{ex}
\begin{ex} \label{ex:productsOfHyperbolicWithT}
 Let $G_i$ be a hyperbolic group with Kazhdan's property (T), where $i=1, \dotsc, n$.
 Such groups can be obtained as cocompact lattices in $\mathrm{Sp}(1,n)$ or constructed using the methods of Ballmann--{\'S}wi{\c{a}}tkowski~\cite{ballmann_swiatkowski:propertyT}.

 Then the direct product $G = G_1 \times \dotsm \times G_n$ has property (T) and admits a uniformly left-invariant coarse median structure of rank at most $n$. 
 We deduce from \cref{cor:paulinForCoarseMedian} that the outer automorphism group of $G$ is finite.
\end{ex}

\section{Metric approximation of finite subsets} \label{sec:approxFiniteSubsets}
Here we discuss the approximation of finite subsets in a coarse median space by finite median metric spaces (or equivalently, finite $\cat(0)$ cubical complexes with rescaled edge lengths).
We start with the following observations about quasi-morphisms into coarse median spaces.

\begin{lem} \label{lem:quasiMorphismParallelEdges}
 For every $L \geq 0$ there exist $\beta, \gamma \geq 0$ such that the following holds.
 
 If $\mu \colon X^3 \to X$ is a coarse median, $C$ a $\cat(0)$ cubical complex and $f \colon (C^{(0)}, \mu_{C^{(0)}}) \to (X,\mu)$ an $L$-quasi-morphism, then,
 \begin{enumerate}[(i)]
  \item \label{item:parallelEdgesQM} For any two parallel edges $e_1$ and $e_2$ with boundary vertices $e_i^\pm$, $i \in \{1,2\}$, we have,
 \begin{equation*}
  \frac{1}{\beta} d(f(e_1^+), f(e_1^-)) - \gamma \leq d(f(e_2^+), f(e_2^-)) \leq \beta d(f(e_1^+), f(e_1^-)) + \gamma.
 \end{equation*}
 
  \item \label{item:combinatorialGeodesicQM} If $x_0, \dotsc, x_m \in C^{(0)}$ is a combinatorial geodesic in $C^{(1)}$, then we have,
  \begin{equation*}
   d(f(x_i), f(x_{i+1})) \leq \beta \, d(f(x_0), f(x_n))) + \gamma
  \end{equation*}
  for all $i \in \{0, \dotsc, m-1\}$.
  \end{enumerate}
\end{lem}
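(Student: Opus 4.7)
The plan is to reduce both parts to the same short median-algebraic calculation: identify a pair of median identities in $(C^{(0)},\mu_{C^{(0)}})$ that evaluate to specific vertices, transport them to $X$ via the $L$-quasi-morphism property (at a cost of $L$ each), and then apply the coarse Lipschitz condition for $\mu$ (with parameters $k$ and $h(0)$) to a pair of medians whose arguments are arranged so that only the distance we want to control appears on the right-hand side.

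For \cref{item:parallelEdgesQM}, I would first verify the following median-algebraic identity: if $e_1=[a,b]$ and $e_2=[c,d]$ are parallel edges dual to a common wall $W$, labelled so that $a,c\in W^-$ and $b,d\in W^+$, then $\mu_{C^{(0)}}(a,b,c)=a$ and $\mu_{C^{(0)}}(a,b,d)=b$. This is a wall-by-wall check: every wall $V\neq W$ has $a,b$ on a common side and $c,d$ on a common side, so the medians are forced to the side of $a,b$, and at $W$ the majority determines the side of $a$, respectively $b$. Transporting these identities via $f$ yields
\begin{equation*}
 d\bigl(f(a),\mu(f(a),f(b),f(c))\bigr)\leq L,\qquad d\bigl(f(b),\mu(f(a),f(b),f(d))\bigr)\leq L,
\end{equation*}
and the coarse Lipschitz condition applied to the two right-hand medians (which agree in their first two slots) gives $d(f(a),f(b))\leq k\,d(f(c),f(d))+2L+h(0)$. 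Swapping the roles of $e_1$ and $e_2$ yields the symmetric inequality, so $\beta:=k$ and $\gamma:=2L+h(0)$ suffice.

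For \cref{item:combinatorialGeodesicQM}, the point is to select the right pair of median identities. Since $x_0,\dots,x_m$ is a combinatorial geodesic, one has $x_i\in \Intv(x_0,x_{i+1})$ and $x_{i+1}\in \Intv(x_i,x_m)$ (these are just the initial and final segments of the geodesic), which translate into $\mu_{C^{(0)}}(x_0,x_i,x_{i+1})=x_i$ and $\mu_{C^{(0)}}(x_m,x_i,x_{i+1})=x_{i+1}$. Transporting to $X$ at cost $L$ each and applying the coarse Lipschitz condition to the resulting pair of medians---which now agree in their last two slots---isolates $d(f(x_0),f(x_m))$ on the right and gives $d(f(x_i),f(x_{i+1}))\leq k\,d(f(x_0),f(x_m))+2L+h(0)$, with the same constants as in (i).

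The subtle point I anticipate is precisely this last choice of medians. The apparently natural identities $\mu_{C^{(0)}}(x_0,x_m,x_i)=x_i$ and $\mu_{C^{(0)}}(x_0,x_m,x_{i+1})=x_{i+1}$ would lead, after coarse Lipschitz, to an estimate containing $d(f(x_i),f(x_{i+1}))$ on the right as well, producing a circular bound that is useless when $k\geq 1$. The resolution is to pick medians that share \emph{two} of three arguments so that the unwanted distance does not appear; this uses the adjacency of $x_i$ and $x_{i+1}$ along the geodesic rather than the full interval $\Intv(x_0,x_m)$.
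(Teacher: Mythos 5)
Your proposal is correct and follows essentially the same argument as the paper: both parts reduce to a pair of median identities sharing two arguments (the paper writes $e_2^\pm = \mu_{C^{(0)}}(e_2^-,e_2^+,e_1^\pm)$ and $x_i = \mu(x_i,x_{i+1},x_0)$, $x_{i+1} = \mu(x_i,x_{i+1},x_m)$, which are the mirror images of yours), transported by the quasi-morphism and combined with the coarse Lipschitz condition to get the constants $\beta = k$, $\gamma = 2L + h(0)$. Your closing remark about why medians sharing only one argument would give a circular bound is a sound observation but not a departure from the paper's route.
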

\begin{proof}
\begin{sloppypar}
  \labelcref{item:parallelEdgesQM} If necessary reorient the edges, so that there exists a unique wall $W = \{h, h^c\} \in \walls(C^{(0)})$ such that $e_i^- \in h$ and $e_i^+ \in h^c$, $i \in \{1,2\}$.
  It follows that $e_2^\pm = \mu_{C^{(0)}}(e_2^-, e_2^+, e_1^\pm)$ because half spaces are convex.
  Thus, since $f$ is a quasi-morphism, we have $d(f(e_2^\pm), \mu(f(e_2^-), f(e_2^+), f(e_1^\pm))) \leq L$.
  We conclude that
  \begin{align*}
   d(f(e_2^+), f(e_2^-)) &\leq d\left( \mu(f(e_2^-), f(e_2^+), f(e_1^+))), \mu(f(e_2^-), f(e_2^+), f(e_1^-)) \right) + 2 L \\
   &\leq k \, d(f(e_1^+), f(e_1^-)) + h(0) + 2 L,
  \end{align*}
  where $k, h$ are parameters of $\mu_{C^{(0)}}$. By symmetry, this proves \labelcref{item:parallelEdgesQM}.
  \end{sloppypar}
  \labelcref{item:combinatorialGeodesicQM} The unique wall which separates $x_i$ from $x_{i+1}$ also separates $x_0$ from $x_m$, and we have $x_i = \mu_C(x_i, x_{i+1}, x_0)$ as well as $x_{i+1} = \mu_C(x_i, x_{i+1}, x_m)$. Now the proof proceeds as in \labelcref{item:parallelEdgesQM}.
\end{proof}

\thmFiniteSubsetApproxByCCC
\begin{proof}
 Let $h, k$ be parameters of a coarse median $\mu \colon X^3 \to X$. 
 Due to \cref{rem:WLOGIfDontCareAboutRank} there exists a finite median algebra $(M, \mu_M)$ and an $h(|A|)$-quasi-morphism $f \colon M \to X$ such that $A \subseteq f(M)$.
 If $\mu$ has rank at most $n$, we may assume that $M$ has rank at most $n+1$.
 By \cref{lem:finiteSubalgebra}, we can also assume that $|M| \leq 2^{2^{|A|}}$.

 The finite median algebra $M$ is the vertex set of the finite $\cat(0)$ cubical complex $C = \cube(M)$.
 For each $W \in \walls(M)$, choose an edge $e$ which intersects $W$ and define $l(W) := d(f(e_-), f(e_+))$.
 By \cref{lem:quasiMorphismParallelEdges}, this does not depend on the choice of the edge up to fixed multiplicative and additive errors depending on the cardinality of $A$.
 Use the chosen $l \colon \walls(M) \to \Rgr$ to define a metric $d_l$ according to \cref{subsec:constructingMetricsOnMedianAlgebras}.
 
 Now let $x_0, \dotsc, x_m \in C^{(0)} = M$ be a combinatorial geodesic in $C^{(1)}$. Then, by \cref{lem:quasiMorphismParallelEdges} \labelcref{item:parallelEdgesQM}, we can estimate
 \begin{align*}
  d(f(x_0), f(x_n)) &\leq \sum_{i = 0}^{m-1} d(f(x_i), f(x_{i+1})) \\
  &\leq \sum_{i = 0}^{m-1} \left( \beta \, d_l(x_i, x_{i+1}) + \gamma \right) = \beta \, d_l(x_0, x_m) + m \gamma  \\
  &\leq \beta \, d_l(x_0, x_m) + |M| \gamma,
 \end{align*}
 where $\beta, \gamma$ are the constants from \cref{lem:quasiMorphismParallelEdges}. On the other hand,
 \begin{align*}
  d_l(x_0, x_m) &= \sum_{i=0}^{m-1} d_l(x_i, x_{i+1}) \\
  &\leq \sum_{i=0}^{m-1} \beta \, \left( d(f(x_i), f(x_{i+1})) + \gamma \right) \\
  &\leq \sum_{i=0}^{m-1} \beta \, \left( \left( \beta \, d(f(x_0), f(x_n)) + \gamma \right) + \gamma \right) \\
  &= m \beta^2 \, d(f(x_0), f(x_m)) + m (\beta \gamma + \gamma) \\
  &\leq |M| \beta^2 \, d(f(x_0), f(x_m)) + |M|  (\beta \gamma + \gamma),
 \end{align*}
 where we have used both parts of \cref{lem:quasiMorphismParallelEdges}.
 Since $\beta, \gamma$ only depend on $L := h(|A|)$ and the parameters of $\mu$, and $|M| \leq 2^{2^{|A|}}$, these two estimates show that $f \colon M \to X$ is an $(\alpha(|A|), \varepsilon(|A|))$-quasi-isometric embedding for functions $\alpha, \varepsilon$ that only have dependencies as allowed for in the statement of the theorem.
 \end{proof}
 The above result can be applied to hyperbolic geodesic spaces (since they admit a rank $1$ coarse median).
 However, in this special case, the classical theorem on approximating trees~\cite[\S 6.2]{gromov:hyperbolicGroups} is stronger than the result proved above.
 Namely, in the classical result there are only explicit additive errors and no multiplicative errors.
 Moreover, by the classical result, every finite subset is contained in a finite geodesic tree, whereas \cref{thm:finiteSubsetApproxByCCC} only give us quasi-isometric approximations by vertex sets of $2$-dimensional $\cat(0)$ cubical complexes with modified edge lengths.

\printbibliography[heading=bibintoc]
\myAffiliation

\end{document}